\newtheorem{theorem}{Theorem}
\newtheorem{algorithm}[theorem]{Algorithm}
\newenvironment{problem}{\pb\rm}{\endpb}
\newenvironment{remark}{\rem\rm}{\endrem}
\newcounter{unnumber}
\newenvironment{proof}{\prf\rm}{\hfill{$\blacksquare$}\endprf}
\newcommand{\R}{\mathbb{R}}%
\newcommand{\N}{\mathbb{N}}%
\newcommand{\ol}{\overline}%
\newcommand{\iso}{\ensuremath{\text{iso}}}
\newcommand{\aniso}{\ensuremath{\text{aniso}}}
\newcommand{\Y}{\ensuremath{\mathcal{Y}}}
\DeclareMathOperator*\inte{int}%
\DeclareMathOperator*\sqri{sqri}%
\DeclareMathOperator*\ri{ri}%
\DeclareMathOperator*\dom{dom}%
\DeclareMathOperator*\B{\overline{\R}}%
\DeclareMathOperator*\gr{Gr}%
\DeclareMathOperator*\ran{ran}%
\DeclareMathOperator*\id{Id}%
\DeclareMathOperator*\prox{prox}%
\title{On the convergence rate improvement of a primal-dual splitting algorithm for solving monotone inclusion problems}
\author{Radu Ioan Bo\c{t} \thanks{Department of Mathematics, Chemnitz University of Technology,
D-09107 Chemnitz, Germany, e-mail:
 radu.bot@mathematik.tu-chemnitz.de. Research partially supported by DFG (German Research Foundation), project BO 2516/4-1.} \and Ern\"{o} Robert Csetnek \thanks {Department of Mathematics, Chemnitz University of Technology, D-09107 Chemnitz, Germany, e-mail:
 robert.csetnek@mathematik.tu-chemnitz.de. Research supported by DFG (German Research Foundation), project BO 2516/4-1.} \and Andr\'e Heinrich
\thanks{Department of Mathematics, Chemnitz University of Technology, D-09107 Chemnitz, Germany, e-mail:
 andre.heinrich@mathematik.tu-chemnitz.de. Research supported by the European Union, the European Social Fund (ESF) and prudsys AG in Chemnitz.}}
\begin{document}
\maketitle

\noindent \textbf{Abstract.} We present two modified versions of the primal-dual splitting algorithm relying on forward-backward splitting proposed in \cite{vu} for solving monotone inclusion problems. Under strong monotonicity assumptions for some of the operators involved we obtain for the sequences of iterates that approach the solution orders of convergence of ${\cal {O}}(\frac{1}{n})$ and ${\cal {O}}(\omega^n)$, for $\omega \in (0,1)$, respectively. The investigated primal-dual algorithms are fully decomposable, in the sense that the operators are processed individually at each iteration. We also discuss the modified algorithms in the context of convex optimization problems and present numerical experiments in image processing and support vector machines classification.\vspace{1ex}

\noindent \textbf{Key Words.} maximally monotone operator, strongly monotone operator, resolvent, operator splitting, subdifferential, strongly convex function, convex optimization algorithm, duality\vspace{1ex}

\noindent \textbf{AMS subject classification.} 47H05, 65K05, 90C25

\section{Introduction and preliminaries}\label{sec-intr}

The problem of finding the zeros of the sum of two (or more) maximally monotone operators in Hilbert spaces continues to be a very active research field, with applications in convex optimization, partial differential equations, signal and image processing, etc. (see \cite{bauschke-book, b-c-h, b-h, b-h2, br-combettes, combettes, combettes-pesquet, vu}). To the most prominent methods in this area belong the proximal point algorithm for finding the zeros of a maximally monotone operator (see \cite{rock-prox}) and the Douglas-Rachford splitting algorithm for finding the zeros of the sum of two maximally monotone operators (see \cite{dor}). However, also motivated by different applications, the research community was interested in considering more general problems, in which the sum of finitely many operators appear, some of them being composed with linear continuous operators \cite{bauschke-book, br-combettes, combettes}. In the last years, even more complex structures were considered, in which also parallel sums are involved, see \cite{b-h, b-h2, combettes-pesquet, vu}.

The algorithms introduced in the literature for these issues have the remarkable property that the operators involved are evaluated separately in each iteration, either by forward steps in the case of the single-valued ones (including here the linear continuous operators and their adjoints) or by backward steps for the set-valued ones, by using the corresponding resolvents. More than that they share the common feature to be of primal-dual type, meaning that they solve not only the primal inclusion problem, but also its Attouch-Th\'era-type dual. In this context we mention the primal-dual algorithms relying on Tseng's forward-backward-forward splitting method (see \cite{br-combettes, combettes-pesquet}), on the forward-backward splitting method (see \cite{vu}) and on the Douglas-Rachford splitting method  (see \cite{b-h2}). A relevant task is to adapt these iterative methods in order be able to investigate their convergence, namely, to eventually determine convergence rates for the sequences generated by the schemes in discussion. This could be important when one is interested in obtaining an optimal solution more rapidly than in their initial formulation, which furnish ``only'' the convergence statement. Accelerated versions of  the primal-dual algorithm from  \cite{combettes-pesquet} were already provided in \cite{b-h}, whereby the reported numerical experiments emphasize the advantages of the first over the original iterative scheme.

The aim of this paper is to provide modified versions of the algorithm proposed by V\~{u} in \cite{vu} for which an evaluation of their convergence behaviour is possible. By assuming that some of the operators involved are strongly monotone, we are able to obtain for the sequences of iterates orders of convergence of ${\cal {O}}(\frac{1}{n})$ and ${\cal {O}}(\omega^n)$, for $\omega \in (0,1)$, respectively.

For the readers convenience we present first some notations which are used throughout the paper (see \cite{bo-van, b-hab, bauschke-book, EkTem, simons, Zal-carte}). Let ${\cal H}$ be a real Hilbert space with \textit{inner product} $\langle\cdot,\cdot\rangle$ and associated \textit{norm} $\|\cdot\|=\sqrt{\langle \cdot,\cdot\rangle}$. The symbols $\rightharpoonup$ and $\rightarrow$ denote weak and strong convergence, respectively. When ${\cal G}$ is another Hilbert space and $K:{\cal H} \rightarrow {\cal G}$ a linear continuous operator, then the \textit{norm} of $K$ is defined as $\|K\| = \sup\{\|Kx\|: x \in {\cal H}, \|x\| \leq 1\}$, while $K^* : {\cal G} \rightarrow {\cal H}$, defined by $\langle K^*y,x\rangle = \langle y,Kx \rangle$ for all $(x,y) \in {\cal H} \times {\cal G}$, denotes the \textit{adjoint operator} of $K$.

For an arbitrary set-valued operator $A:{\cal H}\rightrightarrows {\cal H}$ we denote by $\gr A=\{(x,u)\in {\cal H}\times {\cal H}:u\in Ax\}$ its \emph{graph}, by $\dom A=\{x \in {\cal H} : Ax \neq \emptyset\}$ its \emph{domain} and by $A^{-1}:{\cal H}\rightrightarrows {\cal H}$ its \emph{inverse operator}, defined by $(u,x)\in\gr A^{-1}$ if and only if $(x,u)\in\gr A$. We say that $A$ is \emph{monotone} if $\langle x-y,u-v\rangle\geq 0$ for all $(x,u),(y,v)\in\gr A$. A monotone operator $A$ is said to be \emph{maximally monotone}, if there exists no proper monotone extension of the graph of $A$ on ${\cal H}\times {\cal H}$. The \emph{resolvent} of $A$, $J_A:{\cal H} \rightrightarrows {\cal H}$, is defined by $J_A=(\id_{{\cal H}}+A)^{-1}$, where $\id_{{\cal H}} :{\cal H} \rightarrow {\cal H}, \id_{\cal H}(x) = x$ for all $x \in {\cal H}$, is the \textit{identity operator} on ${\cal H}$. Moreover, if $A$ is maximally monotone, then $J_A:{\cal H} \rightarrow {\cal H}$ is single-valued and maximally monotone (cf. \cite[Proposition 23.7 and Corollary 23.10]{bauschke-book}). For an arbitrary $\gamma>0$ we have (see \cite[Proposition 23.2]{bauschke-book})
$$p\in J_{\gamma A}x \ \mbox{if and only if} \ (p,\gamma^{-1}(x-p))\in\gr A$$
and (see \cite[Proposition 23.18]{bauschke-book})
\begin{equation}\label{j-inv-op}
J_{\gamma A}+\gamma J_{\gamma^{-1}A^{-1}}\circ \gamma^{-1}\id\nolimits_{{\cal H}}=\id\nolimits_{{\cal H}}.
\end{equation}

Let $\gamma>0$ be arbitrary. We say that $A$ is \textit{$\gamma$-strongly monotone} if $\langle x-y,u-v\rangle\geq \gamma\|x-y\|^2$ for all $(x,u),(y,v)\in\gr A$. A single-valued operator $A:{\cal H}\rightarrow {\cal H}$ is said to be \textit{$\gamma$-cocoercive} if $\langle x-y,Ax-Ay\rangle\geq \gamma\|Ax-Ay\|^2$ for all $(x,y)\in {\cal H}\times {\cal H}$. Moreover, $A$ is \textit{$\gamma$-Lipschitzian} if $\|Ax-Ay\|\leq \gamma\|x-y\|$ for all $(x,y)\in {\cal H}\times {\cal H}$. A single-valued linear operator $A:{\cal H} \rightarrow {\cal H}$ is said to be \textit{skew}, if $\langle x,Ax\rangle =0$ for all $x \in {\cal H}$. Finally, the \textit{parallel sum} of two operators $A,B:{\cal H}\rightrightarrows {\cal H}$ is defined by $A\Box B: {\cal H}\rightrightarrows {\cal H}, A\Box B=(A^{-1}+B^{-1})^{-1}$.

The following problem represents the starting point of our investigations (see \cite{vu}).

\begin{problem}\label{pr1}
Let ${\cal H}$ be a real Hilbert space, $z\in {\cal H}$, $A:{\cal H}\rightrightarrows {\cal H}$ a maximally monotone operator and $C:{\cal H}\rightarrow {\cal H}$ an $\eta$-cocoercive operator for $\eta>0$. Let $m$ be a strictly positive integer and for any $i\in\{1,...,m\}$ let ${\cal G}_i$  be a real Hilbert space, $r_i\in {\cal G}_i$, $B_i:{\cal G }_i \rightrightarrows {\cal G}_i$ a maximally monotone operator, $D_i:{\cal G}_i\rightrightarrows {\cal G}_i$ a maximally monotone and $\nu_i$-strongly monotone operator for $\nu_i>0$ and $L_i:{\cal H}\rightarrow$ ${\cal G}_i$ a nonzero linear continuous operator. The problem is to solve the primal inclusion
\begin{equation}\label{sum-k-primal-C-D}
\mbox{find } \ol x \in {\cal H} \ \mbox{such that} \ z\in A\ol x+ \sum_{i=1}^{m}L_i^*\big((B_i\Box D_i)(L_i \ol x-r_i)\big)+C \ol x,
\end{equation}
together with the dual inclusion
\begin{equation}\label{sum-k-dual-C-D}
\mbox{ find } \ol v_1 \in {\cal G}_1,...,\ol v_m \in {\cal G}_m \ \mbox{such that } \exists x\in {\cal H}: \
\left\{
\begin{array}{ll}
z-\sum_{i=1}^{m}L_i^*\ol v_i\in Ax+Cx\\
\ol v_i\in (B_i\Box D_i)(L_ix-r_i), i=1,...,m.
\end{array}\right.
\end{equation}
\end{problem}

We say that $(\ol x, \ol v_1,...,\ol v_m)\in{\cal H} \times$ ${\cal G}_1 \times...\times {\cal G}_m$ is a primal-dual solution to Problem \ref{pr1}, if

\begin{equation}\label{prim-dual-C-D}z-\sum_{i=1}^{m}L_i^*\ol v_i\in A\ol x+C\ol x \mbox{ and }\ol v_i\in (B_i\Box D_i)(L_i\ol x-r_i), i=1,...,m.\end{equation}

If $\ol x\in{\cal H}$ is a solution to \eqref{sum-k-primal-C-D}, then there exists $(\ol v_1,...,\ol v_m)\in$ ${\cal G}_1 \times...\times {\cal G}_m$ such that $(\ol x, \ol v_1,...,\ol v_m)$ is a primal-dual solution to Problem \ref{pr1} and, if $(\ol v_1,...,\ol v_m)\in$ ${\cal G}_1 \times...\times {\cal G}_m$ is a solution to \eqref{sum-k-dual-C-D}, than there exists $\ol x\in{\cal H}$ such that $(\ol x, \ol v_1,...,\ol v_m)$ is a primal-dual solution to Problem \ref{pr1}. Moreover, if $(\ol x, \ol v_1,...,\ol v_m)\in{\cal H} \times$ ${\cal G}_1 \times...\times {\cal G}_m$ is a primal-dual solution to Problem \ref{pr1}, then $\ol x$ is a solution to \eqref{sum-k-primal-C-D} and $(\ol v_1,...,\ol v_m)\in$ ${\cal G}_1 \times...\times {\cal G}_m$ is a solution to \eqref{sum-k-dual-C-D}.

By employing the classical forward-backward algorithm (see \cite{combettes, tseng}) in a renormed product space, V\~{u} proposed in \cite{vu} an iterative scheme for solving a slightly modified version of Problem \ref{pr1} formulated in the presence of some given weights $w_i\in(0,1], i=1,...,m$, with $\sum_{i=1}^mw_i=1$ for the terms occurring in the second summand of the primal inclusion problem. The following result is an adaption of \cite[Theorem 3.1]{vu} to Problem \ref{pr1} in the error-free case and when $\lambda_n=1$ for any $n \geq 0$.

\begin{theorem}\label{vu} In Problem \ref{pr1} suppose that $$z\in\ran \left(A+\sum_{i=1}^m L_i^*\big((B_i\Box D_i)(L_i\cdot-r_i)\big)+C\right).$$

\noindent Let $\tau$ and $\sigma_i$, $i=1,...,m$, be strictly positive numbers such that $$2\cdot\min\{\tau^{-1},\sigma_1^{-1},...,\sigma_m^{-1}\}\cdot\min\{\eta,\nu_1,...,\nu_m\}\left(1-\sqrt{\tau\sum_{i=1}^{m}\sigma_i\|L_i\|^2}\right)>1.$$
Let $(x_0,v_{1,0},...,v_{m,0}) \in {\cal H} \times $ ${\cal G}_1$ $\times...\times$ ${\cal G}_m$ and for all $n\geq 0$ set:
$$
\begin{array}{lll}
x_{n+1}=J_{\tau A}\big[x_n-\tau\big(\sum_{i=1}^{m}L_i^*v_{i,n}+Cx_n-z\big)\big]\\
y_n=2x_{n+1} - x_n\\
v_{i,n+1}=J_{\sigma_i B_i^{-1}}[v_{i,n}+\sigma_i(L_iy_n-D_i^{-1}v_{i,n}-r_i)], i=1,...,m.
\end{array}$$
Then there exists a primal-dual solution $(\ol x, \ol v_1,..., \ol v_m)$ to Problem \ref{pr1} such that $x_n\rightharpoonup \ol x$ and $(v_{1,n},...,v_{m,n})\rightharpoonup(\ol v_1,...,\ol v_m)$ as $n \rightarrow +\infty$.
\end{theorem}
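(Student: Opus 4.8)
The plan is to recast Problem~\ref{pr1} as a single monotone inclusion in a product Hilbert space and to recognise the iteration as a forward--backward step in a suitably renormed version of that space, so that the statement reduces to the standard convergence theory of the forward--backward algorithm.

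\textbf{Step 1 (product-space reformulation).} I would set $\mathcal{K}=\mathcal{H}\times\mathcal{G}_1\times\dots\times\mathcal{G}_m$, equipped with the natural inner product, and introduce $\mathbf{A},\mathbf{S},\mathbf{C}:\mathcal{K}\rightrightarrows\mathcal{K}$,
\begin{align*}
\mathbf{A}(x,v_1,\dots,v_m)&=(-z+Ax)\times(r_1+B_1^{-1}v_1)\times\dots\times(r_m+B_m^{-1}v_m),\\
\mathbf{S}(x,v_1,\dots,v_m)&=\Big(\textstyle\sum_{i=1}^m L_i^*v_i,\,-L_1x,\dots,-L_mx\Big),\\
\mathbf{C}(x,v_1,\dots,v_m)&=(Cx,D_1^{-1}v_1,\dots,D_m^{-1}v_m).
\end{align*}
Since $A$ and the $B_i^{-1}$ are maximally monotone, $\mathbf{A}$ is maximally monotone; $\mathbf{S}$ is a bounded linear skew operator, hence maximally monotone with full domain, so $\mathbf{A}+\mathbf{S}$ is maximally monotone. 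Because each $D_i$ is maximally monotone and $\nu_i$-strongly monotone, $D_i^{-1}$ is single-valued and $\nu_i$-cocoercive; together with the $\eta$-cocoercivity of $C$ this makes $\mathbf{C}$ single-valued and $\beta$-cocoercive with $\beta=\min\{\eta,\nu_1,\dots,\nu_m\}$. A direct computation using \eqref{prim-dual-C-D} shows that $(\ol x,\ol v_1,\dots,\ol v_m)$ is a primal--dual solution to Problem~\ref{pr1} if and only if $0\in(\mathbf{A}+\mathbf{S}+\mathbf{C})(\ol x,\ol v_1,\dots,\ol v_m)$, and the range hypothesis together with the remarks following Problem~\ref{pr1} guarantees that this zero set is nonempty.

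\textbf{Step 2 (renorming).} I would introduce $\mathbf{V}:\mathcal{K}\to\mathcal{K}$,
$$\mathbf{V}(x,v_1,\dots,v_m)=\Big(\tfrac{1}{\tau}x-\textstyle\sum_{i=1}^m L_i^*v_i,\ \tfrac{1}{\sigma_1}v_1-L_1x,\dots,\tfrac{1}{\sigma_m}v_m-L_mx\Big),$$
which is bounded and self-adjoint. Writing $\alpha=\sqrt{\tau\sum_{i=1}^m\sigma_i\|L_i\|^2}<1$, a Cauchy--Schwarz estimate gives $\langle\mathbf{V}\mathbf{q},\mathbf{q}\rangle\ge(1-\alpha)\min\{\tau^{-1},\sigma_1^{-1},\dots,\sigma_m^{-1}\}\|\mathbf{q}\|^2$ for all $\mathbf{q}\in\mathcal{K}$, so $\mathbf{V}$ is strongly positive and $(\mathcal{K},\langle\mathbf{V}\cdot,\cdot\rangle)$ is a real Hilbert space with norm equivalent to the original one; in particular $\|\mathbf{V}^{-1}\|\le\big((1-\alpha)\min\{\tau^{-1},\sigma_1^{-1},\dots,\sigma_m^{-1}\}\big)^{-1}$.

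\textbf{Step 3 (the iteration is a forward--backward step).} Rearranging the updates for $x_{n+1}$ and $v_{i,n+1}$ and using $y_n=2x_{n+1}-x_n$, one checks that the scheme is precisely
$$0\in \mathbf{A}\mathbf{p}_{n+1}+\mathbf{S}\mathbf{p}_{n+1}+\mathbf{C}\mathbf{p}_n+\mathbf{V}(\mathbf{p}_{n+1}-\mathbf{p}_n),\qquad \mathbf{p}_n:=(x_n,v_{1,n},\dots,v_{m,n}),$$
equivalently $\mathbf{p}_{n+1}=\big(\id_{\mathcal{K}}+\mathbf{V}^{-1}(\mathbf{A}+\mathbf{S})\big)^{-1}\big(\mathbf{p}_n-\mathbf{V}^{-1}\mathbf{C}\mathbf{p}_n\big)$. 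This component-wise verification is the heart of the argument: the off-diagonal blocks of $\mathbf{V}$ are chosen exactly to cancel the coupling produced by $\mathbf{S}$, which is why the resolvent $\big(\id_{\mathcal{K}}+\mathbf{V}^{-1}(\mathbf{A}+\mathbf{S})\big)^{-1}$ decouples into the separate resolvents $J_{\tau A}$ and $J_{\sigma_iB_i^{-1}}$ appearing in the algorithm.

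\textbf{Step 4 (convergence).} In $(\mathcal{K},\langle\mathbf{V}\cdot,\cdot\rangle)$ the operator $\mathbf{V}^{-1}(\mathbf{A}+\mathbf{S})$ is maximally monotone and, by the bound on $\|\mathbf{V}^{-1}\|$ from Step~2, $\mathbf{V}^{-1}\mathbf{C}$ is cocoercive with constant at least $\beta'=\beta(1-\alpha)\min\{\tau^{-1},\sigma_1^{-1},\dots,\sigma_m^{-1}\}$. The iteration of Step~3 is a forward--backward step of step size $1$, and the standing assumption of the theorem is exactly $2\beta'>1$, i.e. $1\in(0,2\beta')$; hence the classical convergence result for the forward--backward algorithm (see, e.g., \cite{combettes,bauschke-book}) applies and yields $\mathbf{p}_n\rightharpoonup\ol{\mathbf{p}}$ for some zero $\ol{\mathbf{p}}=(\ol x,\ol v_1,\dots,\ol v_m)$ of $\mathbf{A}+\mathbf{S}+\mathbf{C}$. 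Since $\mathbf{V}$ is a topological isomorphism, weak convergence with respect to $\langle\mathbf{V}\cdot,\cdot\rangle$ coincides with weak convergence in $\mathcal{K}$, which is component-wise, so $x_n\rightharpoonup\ol x$ and $(v_{1,n},\dots,v_{m,n})\rightharpoonup(\ol v_1,\dots,\ol v_m)$, with $(\ol x,\ol v_1,\dots,\ol v_m)$ a primal--dual solution by Step~1. The only delicate points are the algebraic identity in Step~3 and the exact tracking of the cocoercivity constant in Step~4; the remaining verifications are routine.
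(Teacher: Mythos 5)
Your proposal is correct and follows exactly the argument behind this result: the paper itself gives no proof of Theorem \ref{vu}, presenting it as an adaptation of \cite[Theorem 3.1]{vu}, and V\~{u}'s proof is precisely the product-space reformulation with the skew coupling operator, the self-adjoint strongly positive renorming $\mathbf{V}$, and the identification of the scheme as a forward--backward step whose cocoercivity condition $2\beta'>1$ is the standing assumption. Your reconstruction, including the tracking of the constants $\alpha$ and $\beta'$, matches that route.
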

The structure of the paper is as follows. In the next section we propose under appropriate strong monotonicity assumptions two modified versions of the above algorithm which ensure for the sequences of iterates orders of convergence of ${\cal {O}}(\frac{1}{n})$ and ${\cal {O}}(\omega^n)$, for $\omega \in (0,1)$, respectively. In Section \ref{sec-opt-pb} we show how to particularize the general results in the context of nondifferentiable convex optimization problems, where some of the functions occurring in the objective are strongly convex. In the last section we present some numerical experiments in image denoising and support vector machines classification and emphasize also the practical advantages of the modified iterative schemes over the initial one provided in Theorem \ref{vu}.

\section{Two modified primal-dual algorithms}\label{sec2}

In this section we propose in two different settings modified versions of the algorithm in Theorem \ref{vu} and discuss the orders of convergence of the sequences of iterates generated by the new schemes.

\subsection{The case $A+C$ is strongly monotone}\label{subsec2.1}

For the beginning, we show that in case $A+C$ is strongly monotone one can guarantee an order of convergence of ${\cal {O}}(\frac{1}{n})$ for the sequence $(x_n)_{n \geq 0}$. To this end, we update in each iteration the parameters $\tau$ and $\sigma_i$, $i=1,...,m$, and use a modified formula for the sequence $(y_n)_{n \geq 0}$. Due to technical reasons, we apply this method in case $D_i^{-1}$ is equal to zero for $i=1,...,m$, that is $D_i(0)={\cal G}_i$ and $D_i(x)=\emptyset$ for $x\neq 0$. Let us notice that, by using the approach proposed in \cite[Remark 3.2]{b-h}, one can extend the statement of Theorem \ref{th-k-op-acc1} below, which is the main result of this subsection, to the primal-dual pair of monotone inclusions as stated in Problem \ref{pr1}.

More precisely, the problem we consider throughout this subsection is as follows.

\begin{problem}\label{pr2} Let ${\cal H}$ be a real Hilbert space, $z\in {\cal H}$, $A:{\cal H}\rightrightarrows {\cal H}$ a maximally monotone operator and $C:{\cal H}\rightarrow {\cal H}$ a monotone and $\eta$-Lipschitzian operator for $\eta>0$. Let $m$ be a strictly positive integer and for any $i\in\{1,...,m\}$ let ${\cal G}_i$  be a real Hilbert space, $r_i\in {\cal G}_i$, $B_i:{\cal G }_i \rightrightarrows {\cal G}_i$ a maximally monotone operator and $L_i:{\cal H}\rightarrow$ ${\cal G}_i$ a nonzero linear continuous operator. The problem is to solve the primal inclusion
\begin{equation}\label{sum-k-primal-C}
\mbox{find } \ol x \in {\cal H} \ \mbox{such that} \ z\in A\ol x+ \sum_{i=1}^{m}L_i^*(B_i(L_i \ol x-r_i))+C \ol x,
\end{equation}
together with the dual inclusion
\begin{equation}\label{sum-k-dual-C}
\mbox{ find } \ol v_1 \in {\cal G}_1,...,\ol v_m \in {\cal G}_m \ \mbox{such that } \exists x\in {\cal H}: \
\left\{
\begin{array}{ll}
z-\sum_{i=1}^{m}L_i^*\ol v_i\in Ax+Cx\\
\ol v_i\in B_i(L_ix-r_i), i=1,...,m.
\end{array}\right.
\end{equation}
\end{problem}
As for Problem \ref{pr1}, we say that $(\ol x, \ol v_1,...,\ol v_m)\in{\cal H} \times$ ${\cal G}_1 \times...\times {\cal G}_m$ is a primal-dual solution to Problem \ref{pr2}, if
\begin{equation}\label{prim-dual-sol-C}
z-\sum_{i=1}^{m}L_i^*\ol v_i\in A\ol x+C\ol x \mbox{ and }\ol v_i\in B_i(L_i\ol x-r_i), i=1,...,m.
\end{equation}

\begin{remark}\label{acc1-cocoerc-Lip}
One can notice that, in comparison to Problem \ref{pr1}, we relax in Problem \ref{pr2} the assumptions made on the operator $C$. It is obvious that, if $C$  is a $\eta$-cocoercive operator for $\eta >0$, then $C$ is monotone and $1/\eta$-Lipschitzian. Although in case $C$ is the gradient of a convex and differentiable function, due to the celebrated Baillon-Haddad Theorem (see, for instance, \cite[Corollary 8.16]{bauschke-book}), the two classes of operators coincide, in general the second one is larger. Indeed, nonzero linear, skew and Lipschitzian operators are not cocoercive. For example, when ${\cal H}$ and ${\cal G}$ are real Hilbert spaces and $L:{\cal H}\rightarrow {\cal G}$ is nonzero linear continuous, then $(x,v)\mapsto (L^*v,-Lx)$ is an operator having all these properties. This operator appears in a natural way when considering primal-dual monotone inclusion problems as done in \cite{br-combettes}.
\end{remark}

Under the assumption that $A+C$ is $\gamma$-strongly monotone for $\gamma>0$ we propose the following modification of the iterative scheme in Theorem \ref{vu}.

\begin{algorithm}\label{alg-k-op-acc1}$ $

\noindent\begin{tabular}{rl}
\verb"Initialization": & \verb"Choose" $\tau_0>0, \sigma_{i,0}>0$, $i=1,..,m$, \verb"such that"\\
                       & $\tau_0<2\gamma/\eta$, $\lambda\geq \eta+1$, $\tau_0\sum_{i=1}^m \sigma_{i,0}\|L_i\|^2\leq \sqrt{1+\tau_0(2\gamma-\eta\tau_0)/\lambda}$\\
                       & \verb"and" $(x_0,v_{1,0},...,v_{m,0}) \in {\cal H} \times $ ${\cal G}_1$ $\times...\times$ ${\cal G}_m$. \\
\verb"For" $n\geq 0$ \verb"set": & $x_{n+1}=J_{(\tau_n/\lambda)  A}\big[x_n-(\tau_n/\lambda)\big(\sum_{i=1}^{m}L_i^*v_{i,n}+Cx_n-z\big)\big]$\\
                                 & $\theta_n=1/\sqrt{1+\tau_n(2\gamma-\eta\tau_n)/\lambda}$\\
                                 & $y_n=x_{n+1}+\theta_n (x_{n+1}- x_n)$\\
                                 & $v_{i,n+1}=J_{\sigma_{i,n} B_i^{-1}}[v_{i,n}+\sigma_{i,n}(L_iy_n-r_i)]$, $i=1,...,m$\\
                                 & $\tau_{n+1}=\theta_n\tau_n$, $\sigma_{i,n+1}=\sigma_{i,n}/\theta_{n+1}$, $i=1,...,m.$
\end{tabular}
\end{algorithm}

\begin{remark} Notice that the assumption $\tau_0\sum_{i=1}^m \sigma_{i,0}\|L_i\|^2\leq \sqrt{1+\tau_0(2\gamma-\eta\tau_0)/\lambda}$ in Algorithm \ref{alg-k-op-acc1} is equivalent to $\tau_1\sum_{i=1}^m \sigma_{i,0}\|L_i\|^2\leq 1$, being fulfilled if $\tau_0>0$ is chosen such that $$\tau_0\leq\frac{\gamma/\lambda+\sqrt{\gamma^2/\lambda^2+(\sum_{i=1}^m\sigma_{i,0}\|L_i\|^2)^2+\eta/\lambda}}{(\sum_{i=1}^m\sigma_{i,0}\|L_i\|^2)^2+\eta/\lambda}.$$
\end{remark}

\begin{theorem}\label{th-k-op-acc1} Suppose that $A+C$ is $\gamma$-strongly monotone for $\gamma>0$ and let $(\ol x,\ol v_1,...,\ol v_m)$ be a primal-dual solution to
Problem \ref{pr2}. Then the sequences generated by Algorithm \ref{alg-k-op-acc1} fulfill for any $n\geq 0$
\begin{eqnarray*}
& & \frac{\lambda\|x_{n+1}-\ol x\|^2}{\tau_{n+1}^2}+\left(1-\tau_1\sum_{i=1}^m \sigma_{i,0}\|L_i\|^2\right)\sum_{i=1}^{m}\frac{\|v_{i,n}-\ol{v}_i\|^2}{\tau_1\sigma_{i,0}} \leq\\
& & \frac{\lambda\|x_1-\ol x\|^2}{\tau_1^2}+\sum_{i=1}^{m}\frac{\|v_{i,0}-\ol v_i\|^2}{\tau_1\sigma_{i,0}} +\frac{\|x_1-x_0\|^2}{\tau_0^2} + \frac{2}{\tau_0}\sum_{i=1}^{m}\langle L_i(x_1-x_0),v_{i,0}-\ol v_i\rangle.\end{eqnarray*}
Moreover, $\lim\limits_{n\rightarrow+\infty}n\tau_n=\frac{\lambda}{\gamma}$, hence one obtains for $(x_n)_{n \geq 0}$ an order of convergence of ${\cal {O}}(\frac{1}{n})$.
\end{theorem}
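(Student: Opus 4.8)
My plan is to extract a single per-iteration estimate from the two resolvent updates, rearrange it into a discrete Lyapunov (telescoping) inequality whose terminal value is exactly the claimed bound, and finally analyse the asymptotics of $(\tau_n)$.

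\emph{Per-iteration estimate.} I would first rewrite the updates as inclusions: the primal step yields $\tfrac{\lambda}{\tau_n}(x_n-x_{n+1})-\sum_{i=1}^m L_i^*v_{i,n}-Cx_n+z\in Ax_{n+1}$, hence, after adding $Cx_{n+1}$ on both sides, an element of $(A+C)x_{n+1}$; the $i$-th dual step yields $v_{i,n+1}\in B_i\!\big(\tfrac1{\sigma_{i,n}}(v_{i,n}-v_{i,n+1})+L_iy_n-r_i\big)$. Since $(\ol x,\ol v_1,\dots,\ol v_m)$ is a primal-dual solution, $z-\sum_{i=1}^mL_i^*\ol v_i\in(A+C)\ol x$ and $\ol v_i\in B_i(L_i\ol x-r_i)$. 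Pairing the first inclusion (difference with the solution) with $x_{n+1}-\ol x$ and using $\gamma$-strong monotonicity of $A+C$, and pairing each dual inclusion with $v_{i,n+1}-\ol v_i$ and using monotonicity of $B_i$ together with $y_n=x_{n+1}+\theta_n(x_{n+1}-x_n)$, then summing, the terms carrying $L_i^*$ and $L_i$ recombine; what remains is an inequality in $\|x_{n+1}-\ol x\|^2$, $\|x_n-\ol x\|^2$, $\|x_{n+1}-x_n\|^2$, $\|v_{i,n}-\ol v_i\|^2$, $\|v_{i,n+1}-\ol v_i\|^2$, $\|v_{i,n}-v_{i,n+1}\|^2$, the scalar $\langle x_{n+1}-\ol x,Cx_{n+1}-Cx_n\rangle$, and the coupling terms $\langle L_i(x_{n+1}-x_n),v_{i,n+1}-\ol v_i\rangle$ and $\langle L_i(x_{n+1}-\ol x),v_{i,n+1}-v_{i,n}\rangle$. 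Here I would apply the polarization identity $2\langle a-b,b-c\rangle=\|a-c\|^2-\|a-b\|^2-\|b-c\|^2$, bound the $C$-term by $-\eta\|x_{n+1}-\ol x\|\,\|x_{n+1}-x_n\|$ via the $\eta$-Lipschitz property of $C$, and split it and the coupling terms by Young's inequality with weights tuned to $\tau_n$, $\sigma_{i,n}$ and $\|L_i\|$.

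\emph{Telescoping.} The core is then to multiply by $2/\tau_n$ and reassemble the result as $\mathcal E_{n+1}\le\mathcal E_n$ for $n\ge1$, where
$$\mathcal E_n=\frac{\lambda\|x_n-\ol x\|^2}{\tau_n^2}+\sum_{i=1}^m\frac{\|v_{i,n-1}-\ol v_i\|^2}{\tau_1\sigma_{i,0}}+\frac{\|x_n-x_{n-1}\|^2}{\tau_{n-1}^2}+\frac{2}{\tau_{n-1}}\sum_{i=1}^m\langle L_i(x_n-x_{n-1}),v_{i,n-1}-\ol v_i\rangle .$$
Two consequences of the update rules drive this: the invariant $\tau_{n+1}\sigma_{i,n}=\tau_1\sigma_{i,0}$ for all $n\ge0$ (immediate from $\tau_{n+1}=\theta_n\tau_n$ and $\sigma_{i,n+1}=\sigma_{i,n}/\theta_{n+1}$), which makes the dual coefficients constant, and the defining relation $\theta_n^{-2}=1+\tau_n(2\gamma-\eta\tau_n)/\lambda$, equivalently $\lambda/\tau_{n+1}^2=(\lambda+2\gamma\tau_n-\eta\tau_n^2)/\tau_n^2$, which is exactly what collapses the coefficient of $\|x_{n+1}-\ol x\|^2$ coming from strong monotonicity (and from the choice of Young weight for the $C$-term) into $\lambda/\tau_{n+1}^2$; one also uses $\lambda\ge\eta+1$, that $(\tau_n)$ is decreasing (so that $\tau_n<2\gamma/\eta$ persists from the initialization) and that $(\sigma_{i,n})$ is increasing. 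The delicate point, which is the main obstacle, is to distribute the coupling terms (those mixing the extrapolation $\theta_n(x_{n+1}-x_n)$, the operators $L_i$, and the dual iterates) across consecutive iterations and to choose all Young weights so that the $\|v_{i,n}-v_{i,n+1}\|^2$ terms produced by polarization are just large enough to absorb the leftover contributions; the exact forms of $\theta_n$, $\tau_n$ and $\sigma_{i,n}$ are dictated precisely by this requirement. Once $\mathcal E_{n+1}\le\mathcal E_n$ holds, note that $\mathcal E_1$ equals the right-hand side of the claimed estimate, while Young's inequality (with weights $\sigma_{i,0}\|L_i\|^2/\sum_j\sigma_{j,0}\|L_j\|^2$) gives $\tfrac{2}{\tau_n}\sum_i\langle L_i(x_{n+1}-x_n),v_{i,n}-\ol v_i\rangle\ge-\tfrac{\|x_{n+1}-x_n\|^2}{\tau_n^2}-\big(\tau_1\sum_i\sigma_{i,0}\|L_i\|^2\big)\sum_i\tfrac{\|v_{i,n}-\ol v_i\|^2}{\tau_1\sigma_{i,0}}$, so that $\mathcal E_{n+1}$ dominates the left-hand side of the claimed estimate, the coefficient $1-\tau_1\sum_i\sigma_{i,0}\|L_i\|^2\ge0$ being guaranteed by the initialization inequality (equivalently $\tau_1\sum_i\sigma_{i,0}\|L_i\|^2\le1$). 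The chain $\mathrm{LHS}(n)\le\mathcal E_{n+1}\le\mathcal E_n\le\dots\le\mathcal E_1=\mathrm{RHS}$ for $n\ge1$, together with the case $n=0$ handled directly as $\mathrm{LHS}(0)\le\mathcal E_1$, finishes the first assertion.

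\emph{Convergence order.} Since $\theta_n<1$, the sequence $(\tau_n)$ is strictly decreasing and bounded below, hence convergent; if its limit $\ell$ were positive, then $\theta_n\to(1+\ell(2\gamma-\eta\ell)/\lambda)^{-1/2}<1$ would force $\tau_{n+1}/\tau_n$ below $1$ in the limit, contradicting $\tau_n\to\ell>0$, so $\tau_n\to0$. Then $\tfrac1{\tau_{n+1}}-\tfrac1{\tau_n}=\tfrac1{\tau_n}(\theta_n^{-1}-1)=\tfrac{\sqrt{1+\tau_n(2\gamma-\eta\tau_n)/\lambda}-1}{\tau_n}\to\tfrac{\gamma}{\lambda}$, so by the Stolz--Ces\`aro theorem $\tfrac1{n\tau_n}\to\tfrac{\gamma}{\lambda}$, i.e. $n\tau_n\to\tfrac{\lambda}{\gamma}$. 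Combined with the estimate just proved, which shows $\tfrac{\lambda\|x_{n+1}-\ol x\|^2}{\tau_{n+1}^2}$ is bounded by a constant, this gives $\|x_{n+1}-\ol x\|=\mathcal O(\tau_{n+1})=\mathcal O(1/n)$.
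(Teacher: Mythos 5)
Your plan reproduces the paper's proof essentially step for step: the same two resolvent inclusions combined with the $\gamma$-strong monotonicity of $A+C$ and the monotonicity of $B_i$, the same polarization and Young estimates, the same invariants $\tau_{n+1}\sigma_{i,n}=\tau_1\sigma_{i,0}$ and $\lambda/\tau_{n+1}^2+2\gamma/\tau_{n+1}-\eta=\lambda/\tau_{n+2}^2$ driving the cancellations, and your Lyapunov quantity $\mathcal{E}_n$ is exactly the expression telescoped in the paper's Fej\'er-type inequality, followed by the identical final Young step and the same Stolz--Ces\`aro argument for $n\tau_n\to\lambda/\gamma$. The proposal is correct and takes the same route as the paper.
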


\begin{proof} The idea of the proof relies on showing that the following Fej\'{e}r-type inequality is true for any $n \geq 0$
\begin{eqnarray}\label{A-B-acc1''}
& & \frac{\lambda}{\tau_{n+2}^2}\|x_{n+2}-\ol x\|^2+\sum_{i=1}^{m}\frac{\|v_{i,n+1}-\ol v_i\|^2}{\tau_1\sigma_{i,0}} + \frac{\|x_{n+2}-x_{n+1}\|^2}{\tau_{n+1}^2} - \nonumber\\
& &  \frac{2}{\tau_{n+1}}\sum_{i=1}^{m} \langle L_i(x_{n+2}-x_{n+1}),-v_{i,n+1}+\ol v_i\rangle \leq \\
& & \frac{\lambda}{\tau_{n+1}^2}\|x_{n+1}-\ol x\|^2+\sum_{i=1}^{m}\frac{\|v_{i,n}-\ol v_i\|^2}{\tau_1\sigma_{i,0}} +\frac{\|x_{n+1}-x_n\|^2}{\tau_n^2} - \nonumber \\
& & \frac{2}{\tau_n} \sum_{i=1}^{m} \langle L_i(x_{n+1}-x_n),-v_{i,n}+\ol v_i\rangle\nonumber.
\end{eqnarray}

To this end we use first that in the light of the definition of the resolvents it holds for any $n \geq 0$
\begin{equation}\label{J-A-acc1}
\frac{\lambda}{\tau_{n+1}}(x_{n+1}-x_{n+2})-\left(\sum_{i=1}^{m}L_i^*v_{i,n+1}+Cx_{n+1}-z\right)+Cx_{n+2}\in (A+C)x_{n+2}.
\end{equation}

Since $A+C$ is $\gamma$-strongly monotone, \eqref{prim-dual-sol-C} and \eqref{J-A-acc1} yield for any $n \geq 0$
\begin{eqnarray*}
& & \gamma\|x_{n+2}-\ol x\|^2\leq  \left\langle x_{n+2}-\ol x, \frac{\lambda}{\tau_{n+1}}(x_{n+1}-x_{n+2}) \right \rangle + \\
& & \left\langle x_{n+2}-\ol x, -\left(\sum_{i=1}^{m}L_i^*v_{i,n+1}+Cx_{n+1}-z\right)+Cx_{n+2}-\left(z-\sum_{i=1}^{m}L_i^*\ol v_i\right)\right\rangle = \\
& & \frac{\lambda}{\tau_{n+1}}\left\langle x_{n+2}-\ol x,x_{n+1}-x_{n+2}\right\rangle+\langle x_{n+2}-\ol x,Cx_{n+2}-Cx_{n+1}\rangle+\\
& & \sum_{i=1}^{m}\left\langle L_i(x_{n+2}-\ol x),\ol v_i-v_{i,n+1}\right\rangle.
\end{eqnarray*}

Further, we have \begin{equation}\label{hilb}\langle x_{n+2}-\ol x,x_{n+1}-x_{n+2}\rangle=\frac{\|x_{n+1}-\ol x\|^2}{2}-\frac{\|x_{n+2}-\ol x\|^2}{2}-\frac{\|x_{n+1}-x_{n+2}\|^2}{2}\end{equation}
and, since $C$ is $\eta$-Lipschitzian,
$$\langle x_{n+2}-\ol x,Cx_{n+2}-Cx_{n+1}\rangle\leq \frac{\eta\tau_{n+1}}{2}\|x_{n+2}-\ol x\|^2+\frac{\eta}{2\tau_{n+1}}\|x_{n+2}-x_{n+1}\|^2,$$
hence for any $n \geq 0$ it yields
\begin{eqnarray*}
& & \left(\frac{\lambda}{\tau_{n+1}}+2\gamma-\eta\tau_{n+1}\right)\|x_{n+2}-\ol x\|^2\leq \\
& & \frac{\lambda}{\tau_{n+1}}\|x_{n+1}-\ol x\|^2-\frac{\lambda-\eta}{\tau_{n+1}}\|x_{n+2}-x_{n+1}\|^2+2\sum_{i=1}^{m}\langle L_i(x_{n+2}-\ol x),\ol v_i-v_{i,n+1}\rangle.
\end{eqnarray*}
Taking into account that $\lambda\geq\eta+1$ we obtain for any $n \geq 0$ that
\begin{eqnarray}\label{x-n-acc1}
& & \left(\frac{\lambda}{\tau_{n+1}}+2\gamma-\eta\tau_{n+1}\right)\|x_{n+2}-\ol x\|^2\leq \nonumber \\
& & \frac{\lambda}{\tau_{n+1}}\|x_{n+1}-\ol x\|^2-\frac{1}{\tau_{n+1}}\|x_{n+2}-x_{n+1}\|^2+2\sum_{i=1}^{m}\langle L_i(x_{n+2}-\ol x),\ol v_i-v_{i,n+1}\rangle.
\end{eqnarray}

On the other hand, for every $i=1,...,m$ and any $n \geq 0$, from
\begin{equation}\label{J-B-i-acc1}
\frac{1}{\sigma_{i,n}}(v_{i,n}-v_{i,n+1})+L_iy_n-r_i\in B_i^{-1}v_{i,n+1},
\end{equation}
the monotonicity of $B_i^{-1}$ and \eqref{prim-dual-sol-C} we obtain
\begin{eqnarray*}
0 & \leq & \left\langle \frac{1}{\sigma_{i,n}}(v_{i,n}-v_{i,n+1})+L_iy_n-r_i-(L_i \ol x-r_i), v_{i,n+1}-\ol v_i\right\rangle\\
& = & \frac{1}{\sigma_{i,n}}\langle v_{i,n}-v_{i,n+1},v_{i,n+1}-\ol v_i\rangle+\langle L_i(y_n-\ol x),v_{i,n+1}-\ol v_i\rangle\\
& = & \frac{1}{2\sigma_{i,n}}\|v_{i,n}-\ol v_i\|^2-\frac{1}{2\sigma_{i,n}}\|v_{i,n}-v_{i,n+1}\|^2-\frac{1}{2\sigma_{i,n}}\|v_{i,n+1}-\ol v_i\|^2\\
& + & \langle L_i(y_n-\ol x),v_{i,n+1}-\ol v_i\rangle,
\end{eqnarray*}
hence
\begin{equation}\label{v-n-acc1}
\frac{\|v_{i,n+1}-\ol v_i\|^2}{\sigma_{i,n}}\leq \frac{\|v_{i,n}-\ol v_i\|^2}{\sigma_{i,n}}-\frac{\|v_{i,n}-v_{i,n+1}\|^2}{\sigma_{i,n}}+2\langle L_i(y_n-\ol x),v_{i,n+1}-\ol v_i\rangle.
\end{equation}
Summing up the inequalities in \eqref{x-n-acc1} and \eqref{v-n-acc1} we obtain for any $n\geq 0$
\begin{eqnarray}\label{A-B-acc1}
& & \left(\frac{\lambda}{\tau_{n+1}}+2\gamma-\eta\tau_{n+1}\right)\|x_{n+2}-\ol x\|^2+\sum_{i=1}^{m}\frac{\|v_{i,n+1}-\ol v_i\|^2}{\sigma_{i,n}}\leq \nonumber\\
& & \frac{\lambda}{\tau_{n+1}}\|x_{n+1}-\ol x\|^2+\sum_{i=1}^{m}\frac{\|v_{i,n}-\ol v_i\|^2}{\sigma_{i,n}} -\frac{\|x_{n+2}-x_{n+1}\|^2}{\tau_{n+1}}-\sum_{i=1}^{m}\frac{\|v_{i,n}-v_{i,n+1}\|^2}{\sigma_{i,n}}\\
& & + 2\sum_{i=1}^{m}\langle L_i(x_{n+2}-y_n),-v_{i,n+1}+\ol v_i\rangle. \nonumber
\end{eqnarray}

Further, since $y_n=x_{n+1}+\theta_n (x_{n+1}- x_n)$, for every $i=1,...,m$ and any $n \geq 0$ it holds
\begin{eqnarray*}
& & \langle L_i(x_{n+2}-y_n),-v_{i,n+1}+\ol v_i\rangle=\langle L_i\big(x_{n+2}-x_{n+1}-\theta_n(x_{n+1}-x_n)\big),-v_{i,n+1}+\ol v_i\rangle=\\
& & \langle L_i(x_{n+2}-x_{n+1}),-v_{i,n+1}+\ol v_i\rangle-\theta_n\langle L_i(x_{n+1}-x_n),-v_{i,n}+\ol v_i\rangle+ \\
& & \theta_n\langle L_i(x_{n+1}-x_n),-v_{i,n}+v_{i,n+1}\rangle \leq\\
& & \langle L_i(x_{n+2}-x_{n+1}),-v_{i,n+1}+\ol v_i\rangle-\theta_n\langle L_i(x_{n+1}-x_n),-v_{i,n}+\ol v_i\rangle+\\
& & \frac{\theta_n^2\|L_i\|^2\sigma_{i,n}}{2}\|x_{n+1}-x_n\|^2+\frac{\|v_{i,n}-v_{i,n+1}\|^2}{2\sigma_{i,n}}.
\end{eqnarray*}

By combining the last inequality with \eqref{A-B-acc1} we obtain for any $n \geq 0$
\begin{eqnarray}\label{A-B-acc1'}
& & \left(\frac{\lambda}{\tau_{n+1}}+2\gamma-\eta\tau_{n+1}\right)\|x_{n+2}-\ol x\|^2+\sum_{i=1}^{m}\frac{\|v_{i,n+1}-\ol v_i\|^2}{\sigma_{i,n}} + \frac{\|x_{n+2}-x_{n+1}\|^2}{\tau_{n+1}}-\nonumber\\
& & 2\sum_{i=1}^{m} \langle L_i(x_{n+2}-x_{n+1}),-v_{i,n+1}+\ol v_i\rangle \leq \\
& & \frac{\lambda}{\tau_{n+1}}\|x_{n+1}-\ol x\|^2+\sum_{i=1}^{m}\frac{\|v_{i,n}-\ol v_i\|^2}{\sigma_{i,n}} +\left(\sum_{i=1}^{m}\|L_i\|^2\sigma_{i,n}\right)\theta_n^2\|x_{n+1}-x_n\|^2 - \nonumber\\
& & 2\sum_{i=1}^{m}\theta_n\langle L_i(x_{n+1}-x_n),-v_{i,n}+\ol v_i\rangle \nonumber.
\end{eqnarray}

After dividing \eqref{A-B-acc1'} by $\tau_{n+1}$ and noticing that for any $n \geq 0$
$$\frac{\lambda}{\tau_{n+1}^2}+\frac{2\gamma}{\tau_{n+1}}-\eta=\frac{\lambda}{\tau_{n+2}^2},$$
$$\tau_{n+1}\sigma_{i,n}=\tau_n\sigma_{i,n-1}=...=\tau_1\sigma_{i,0}$$ and
$$\frac{\left(\sum_{i=1}^{m}\|L_i\|^2\sigma_{i,n}\right)\theta_n^2}{\tau_{n+1}}=
\frac{\tau_{n+1}\sum_{i=1}^{m}\|L_i\|^2\sigma_{i,n}}{\tau_n^2}=\frac{\tau_1\sum_{i=1}^{m}\|L_i\|^2\sigma_{i,0}}{\tau_n^2}\leq \frac{1}{\tau_n^2},$$
it follows that the Fej\'{e}r-type inequality \eqref{A-B-acc1''} is true.

Let $N \in \N, N \geq 2$. Summing up the inequality in \eqref{A-B-acc1''} from $n=0$ to $N-1$, it yields
\begin{eqnarray}\label{A-B-acc1-N}
& & \frac{\lambda}{\tau_{N+1}^2}\|x_{N+1}-\ol x\|^2+\sum_{i=1}^{m}\frac{\|v_{i,N}-\ol v_i\|^2}{\tau_1\sigma_{i,0}} + \frac{\|x_{N+1}-x_N\|^2}{\tau_N^2} \leq \nonumber\\
& & \frac{\lambda}{\tau_1^2}\|x_1-\ol x\|^2+\sum_{i=1}^{m}\frac{\|v_{i,0}-\ol v_i\|^2}{\tau_1\sigma_{i,0}} +\frac{\|x_1-x_0\|^2}{\tau_0^2} +\\
& & 2\sum_{i=1}^{m}\left(\frac{1}{\tau_N}\langle L_i(x_{N+1}-x_N),-v_{i,N}+\ol v_i\rangle-\frac{1}{\tau_0}\langle L_i(x_1-x_0),-v_{i,0}+\ol v_i\rangle\right). \nonumber
\end{eqnarray}

Further, for every $i=1, ..., m$ we use the inequality
\begin{eqnarray*}
& & \frac{2}{\tau_N}\langle L_i(x_{N+1}-x_N),-v_{i,N}+\ol v_i\rangle\leq\\
& & \frac{\sigma_{i,0}\|L_i\|^2}{\tau_N^2(\sum_{i=1}^m \sigma_{i,0}\|L_i\|^2)}\|x_{N+1}-x_N\|^2+\frac{\sum_{i=1}^m \sigma_{i,0}\|L_i\|^2}{\sigma_{i,0}}\|v_{i,N}-\ol v_i\|^2
\end{eqnarray*}
and obtain finally
$$ \frac{\lambda\|x_{N+1}-\ol x\|^2}{\tau_{N+1}^2}+\sum_{i=1}^{m}\frac{\|v_{i,N}-\ol{v}_i\|^2}{\tau_1\sigma_{i,0}} \leq  \frac{\lambda\|x_1-\ol x\|^2}{\tau_1^2}+\sum_{i=1}^{m}\frac{\|v_{i,0}-\ol v_i\|^2}{\tau_1\sigma_{i,0}} +\frac{\|x_1-x_0\|^2}{\tau_0^2}$$$$ +\frac{2}{\tau_0}\sum_{i=1}^{m}\langle L_i(x_1-x_0),v_{i,0}-\ol v_i\rangle+\sum_{i=1}^m\frac{\sum_{j=1}^m \sigma_{j,0}\|L_j\|^2}{\sigma_{i,0}}\|v_{i,N}-\ol v_i\|^2,$$
which rapidly yields the inequality in the statement of the theorem.

We close the proof by showing that $\lim\limits_{n\rightarrow+\infty}n\tau_n=\lambda/\gamma$. Notice that for any $n\geq 0$, \begin{equation}\label{tau-n}\tau_{n+1}=\frac{\tau_n}{\sqrt{1+\frac{\tau_n}{\lambda}(2\gamma-\eta\tau_n)}}.\end{equation}
Since $0<\tau_0<2\gamma/\eta$, it follows by induction that $0<\tau_{n+1}<\tau_n<\tau_0<2\gamma/\eta$ for any $n \geq 1$, hence the sequence $(\tau_n)_{n \geq 0}$ converges. In the light of \eqref{tau-n} one easily obtains that $\lim\limits_{n\rightarrow+\infty}\tau_n=0$ and, further, that $\lim\limits_{n\rightarrow+\infty}\frac{\tau_n}{\tau_{n+1}}=1$. As $(\frac{1}{\tau_n})_{n\geq 0}$ is a strictly increasing and unbounded sequence, by applying the Stolz-Ces\`{a}ro Theorem it yields
\begin{align*}
\lim\limits_{n\rightarrow+\infty}n\tau_n & = \lim\limits_{n\rightarrow+\infty}\frac{n}{\frac{1}{\tau_n}}=\lim\limits_{n\rightarrow+\infty}\frac{n+1-n}{\frac{1}{\tau_{n+1}}-\frac{1}{\tau_n}}
=\lim\limits_{n\rightarrow+\infty}\frac{\tau_n\tau_{n+1}}{\tau_n-\tau_{n+1}}\\
& = \lim\limits_{n\rightarrow+\infty}\frac{\tau_n\tau_{n+1}(\tau_n+\tau_{n+1})}{\tau_n^2-\tau_{n+1}^2} =\lim\limits_{n\rightarrow+\infty}\frac{\tau_n\tau_{n+1}(\tau_n+\tau_{n+1})}{\tau_{n+1}^2\frac{\tau_n}{\lambda}(2\gamma-\eta\tau_n)}\\
& = \lim\limits_{n\rightarrow+\infty}\frac{\tau_n+\tau_{n+1}}{\tau_{n+1}(\frac{2\gamma}{\lambda}-\frac{\eta}{\lambda}\tau_n)}=
\lim\limits_{n\rightarrow+\infty}\frac{\frac{\tau_n}{\tau_{n+1}}+1}{\frac{2\gamma}{\lambda}-\frac{\eta}{\lambda}\tau_n}=\frac{\lambda}{\gamma}.
\end{align*}
\end{proof}

\begin{remark}\label{unique-x}
Let us mention that, if $A+C$ is $\gamma$-strongly monotone with $\gamma>0$, then the operator $A+\sum_{i=1}^m L_i^*(B_i(L_i\cdot-r_i))+C$ is strongly monotone, as well, thus the monotone inclusion problem \eqref{sum-k-primal-C} has at most one solution. Hence, if $(\ol x,\ol v_1,...,\ol v_m)$ is a primal-dual solution to Problem \ref{pr2}, then $\ol x$ is the unique solution to \eqref{sum-k-primal-C}.  Notice that the problem \eqref{sum-k-dual-C} may not have an unique solution.
\end{remark}

\subsection{The case $A+C$ and $B_i^{-1}+D_i^{-1}, i=1,..., m,$ are strongly monotone}\label{subsec2.2}

In this subsection we propose a modified version of the algorithm in Theorem \ref{vu} which guarantees when $A+C$ and $B_i^{-1}+D_i^{-1}, i=1,...,m,$ are strongly monotone
orders of convergence of ${\cal{O}}(\omega^n)$, for $\omega \in (0,1)$, for the sequences $(x_n)_{n \geq 0}$ and $(v_{i,n})_{n \geq 0}, i=1,...,m$. The algorithm aims to solve the primal-dual pair of monotone inclusions stated in Problem \ref{pr1} under relaxed assumptions for the operators $C$ and $D_i^{-1}, i=1,...,m$.

\begin{problem}\label{pr3}
Let ${\cal H}$ be a real Hilbert space, $z\in {\cal H}$, $A:{\cal H}\rightrightarrows {\cal H}$ a maximally monotone operator and $C:{\cal H}\rightarrow {\cal H}$ a monotone and $\eta$-Lipschitzian operator for $\eta>0$. Let $m$ be a strictly positive integer and for any $i\in\{1,...,m\}$ let ${\cal G}_i$  be a real Hilbert space, $r_i\in {\cal G}_i$, $B_i:{\cal G }_i \rightrightarrows {\cal G}_i$ a maximally monotone operator, $D_i:{\cal G}_i\rightrightarrows {\cal G}_i$ a monotone operators such that $D_i^{-1}$ is $\nu_i$-Lipschitzian for $\nu_i>0$ and $L_i:{\cal H}\rightarrow$ ${\cal G}_i$ a nonzero linear continuous operator. The problem is to solve the primal inclusion
\begin{equation}\label{sum-k-primal-C-D-acc2}
\mbox{find } \ol x \in {\cal H} \ \mbox{such that} \ z\in A\ol x+ \sum_{i=1}^{m}L_i^*\big((B_i\Box D_i)(L_i \ol x-r_i)\big)+C \ol x,
\end{equation}
together with the dual inclusion
\begin{equation}\label{sum-k-dual-C-D-acc2}
\mbox{ find } \ol v_1 \in {\cal G}_1,...,\ol v_m \in {\cal G}_m \ \mbox{such that } \exists x\in {\cal H}: \
\left\{
\begin{array}{ll}
z-\sum_{i=1}^{m}L_i^*\ol v_i\in Ax+Cx\\
\ol v_i\in (B_i\Box D_i)(L_ix-r_i), i=1,...,m.
\end{array}\right.
\end{equation}
\end{problem}

Under the assumption that $A+C$ is $\gamma$-strongly monotone for $\gamma>0$  and $B_i^{-1}+D_i^{-1}$ is $\delta_i$-strongly monotone with $\delta_i>0, i=1,....m$,  we propose the following modification of the iterative scheme in Theorem \ref{vu}.

\begin{algorithm}\label{alg-k-op-acc2}$ $

\noindent\begin{tabular}{rl}
\verb"Initialization": & \verb"Choose" $\mu >0$ \verb"such that"\\
                       & $\mu\leq \min\left \{\gamma^2/\eta^2,\delta_1^2/\nu_1^2,...,\delta_m^2/\nu_m^2,\sqrt{\gamma/\left(\sum_{i=1}^{m}\|L_i\|^2/\delta_i\right)}\right\}$, \\
                       & $\tau=\mu/(2\gamma)$, $\sigma_i=\mu/(2\delta_i)$, $i=1,..,m$, \\
                       & $\theta\in[2/(2+\mu),1]$ \verb"and" $(x_0,v_{1,0},...,v_{m,0}) \in {\cal H} \times $ ${\cal G}_1$ $\times...\times$ ${\cal G}_m$.\\
\verb"For" $n\geq 0$ \verb"set": & $x_{n+1}=J_{\tau A}\big[x_n-\tau\big(\sum_{i=1}^{m}L_i^*v_{i,n}+Cx_n-z\big)\big]$\\
                                 & $y_n=x_{n+1}+\theta (x_{n+1} - x_n)$\\
                                 & $v_{i,n+1}=J_{\sigma_i B_i^{-1}}[v_{i,n}+\sigma_i(L_iy_n-D_i^{-1}v_{i,n}-r_i)]$, $i=1,...,m.$
\end{tabular}
\end{algorithm}

\begin{theorem}\label{th-k-op-acc2} Suppose that $A+C$ is $\gamma$-strongly monotone for $\gamma>0$, $B_i^{-1}+D_i^{-1}$ is $\delta_i$-strongly monotone for $\delta_i>0$, $i=1,...,m$, and let $(\ol x,\ol v_1,...,\ol v_m)$ be a primal-dual solution to Problem \ref{pr3}. Then the sequences generated by Algorithm \ref{alg-k-op-acc2} fulfill for any $n\geq 0$
\begin{eqnarray*}
& & \!\!\!\! \gamma\|x_{n+1}-\ol x\|^2+(1-\omega)\sum_{i=1}^{m}\delta_i\|v_{i,n}-\ol v_i\|^2\leq\\
& & \!\!\!\! \omega^n \left(\gamma\|x_1- \ol x\|^2+\sum_{i=1}^{m}\delta_i\|v_{i,0}-\ol v_i\|^2+\frac{\gamma}{2}\omega\|x_1-x_0\|^2+\mu\omega\sum_{i=1}^{m}\langle L_i(x_1-x_0),v_{i,0}-\ol v_i\rangle\right),
\end{eqnarray*}
where $0<\omega=\frac{2(1+\theta)}{4+\mu}<1$.
\end{theorem}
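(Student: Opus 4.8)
The plan is to exhibit a Lyapunov functional that contracts by the factor $\omega$ at each iteration and whose value at $n=0$ is exactly the bracket on the right-hand side of the claimed estimate. Define
$$
\Phi_n := \gamma\|x_{n+1}-\ol x\|^2 + \sum_{i=1}^{m}\delta_i\|v_{i,n}-\ol v_i\|^2 + \frac{\gamma\omega}{2}\|x_{n+1}-x_n\|^2 + \mu\omega\sum_{i=1}^{m}\langle L_i(x_{n+1}-x_n),\, v_{i,n}-\ol v_i\rangle,
$$
so that $\Phi_0$ coincides with the parenthesised expression in the statement. I would then proceed in three steps: (a) show $\Phi_n\geq\gamma\|x_{n+1}-\ol x\|^2+(1-\omega)\sum_{i=1}^{m}\delta_i\|v_{i,n}-\ol v_i\|^2\geq 0$ for every $n\geq 0$; (b) establish the one-step contraction $\Phi_{n+1}\leq\omega\Phi_n$ for every $n\geq 0$; (c) iterate (b) to get $\Phi_n\leq\omega^n\Phi_0$ and substitute (a) on the left-hand side.

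Step (a) and the estimate $0<\omega<1$ are short. For (a), divide the desired inequality by $\omega$ and estimate the coupling term by Young's inequality, $\mu|\langle L_i(x_{n+1}-x_n),v_{i,n}-\ol v_i\rangle|\leq \frac{\mu^2\|L_i\|^2}{4\delta_i}\|x_{n+1}-x_n\|^2+\delta_i\|v_{i,n}-\ol v_i\|^2$; summing over $i$ the $\|v_{i,n}-\ol v_i\|^2$ contributions cancel and there is left $\big(\frac{\gamma}{2}-\frac{\mu^2}{4}\sum_{i=1}^{m}\|L_i\|^2/\delta_i\big)\|x_{n+1}-x_n\|^2\geq 0$, the sign being guaranteed by the choice $\mu\leq\sqrt{\gamma/(\sum_{i=1}^{m}\|L_i\|^2/\delta_i)}$. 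For $\omega=\frac{2(1+\theta)}{4+\mu}$, positivity is clear, $\theta\leq 1$ yields $\omega\leq\frac{4}{4+\mu}<1$, and the lower bound $\theta\geq\frac{2}{2+\mu}$ is precisely equivalent to $\omega\geq\frac{2}{2+\mu}$, which is the reserve that step (b) consumes.

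The heart of the matter, and the main obstacle, is step (b). From the definition of the resolvents in Algorithm \ref{alg-k-op-acc2}, the $x$-update at index $n+1$ reads $\frac{1}{\tau}(x_{n+1}-x_{n+2})-\big(\sum_{i=1}^{m}L_i^{*}v_{i,n+1}+Cx_{n+1}-z\big)+Cx_{n+2}\in(A+C)x_{n+2}$, and the $v_i$-update at index $n$, after adding $D_i^{-1}v_{i,n+1}$ to the inclusion for $B_i^{-1}$, reads $\frac{1}{\sigma_i}(v_{i,n}-v_{i,n+1})+L_iy_n-r_i-D_i^{-1}v_{i,n}+D_i^{-1}v_{i,n+1}\in(B_i^{-1}+D_i^{-1})v_{i,n+1}$. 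Pairing these with $x_{n+2}-\ol x$ and $v_{i,n+1}-\ol v_i$ and invoking the solution relations $z-\sum_{i=1}^{m}L_i^{*}\ol v_i\in(A+C)\ol x$ and $L_i\ol x-r_i\in(B_i^{-1}+D_i^{-1})\ol v_i$ together with the $\gamma$-strong monotonicity of $A+C$ and the $\delta_i$-strong monotonicity of $B_i^{-1}+D_i^{-1}$, one obtains two estimates; in them I would expand $\langle x_{n+2}-\ol x,x_{n+1}-x_{n+2}\rangle$ and $\langle v_{i,n+1}-\ol v_i,v_{i,n}-v_{i,n+1}\rangle$ by the three-point identity \eqref{hilb}, bound $\langle x_{n+2}-\ol x,Cx_{n+2}-Cx_{n+1}\rangle$ and $\langle v_{i,n+1}-\ol v_i,D_i^{-1}v_{i,n+1}-D_i^{-1}v_{i,n}\rangle$ by Young's inequality using the Lipschitz constants $\eta,\nu_i$ and the step sizes $\tau=\mu/(2\gamma)$, $\sigma_i=\mu/(2\delta_i)$ — the conditions $\mu\leq\gamma^2/\eta^2$ and $\mu\leq\delta_i^2/\nu_i^2$ serving exactly to absorb these Lipschitz contributions — and finally substitute $y_n=x_{n+1}+\theta(x_{n+1}-x_n)$. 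Summing the two estimates, the coupling terms $\langle L_i(x_{n+2}-\ol x),\ol v_i-v_{i,n+1}\rangle$ and $\langle L_i(x_{n+1}-\ol x),v_{i,n+1}-\ol v_i\rangle$ combine to $-\langle L_i(x_{n+2}-x_{n+1}),v_{i,n+1}-\ol v_i\rangle$, supplying the mixed term of $\Phi_{n+1}$, while the residual $\theta\langle L_i(x_{n+1}-x_n),v_{i,n+1}-\ol v_i\rangle$ is decomposed into $\theta\langle L_i(x_{n+1}-x_n),v_{i,n}-\ol v_i\rangle$ plus a term that, using $\mu^2\sum_{i=1}^{m}\|L_i\|^2/\delta_i\leq\gamma$ once more, is absorbed by the $\|v_{i,n+1}-v_{i,n}\|^2$ remainders produced by the three-point identities. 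The delicate part is the bookkeeping of these sign-indefinite $L_i$-coupling terms across the two consecutive indices — grouping each with an appropriate quadratic term rather than matching coefficients individually — so that, precisely under the stated choice of $\tau,\sigma_i,\theta$, the collected inequality becomes exactly $\Phi_{n+1}\leq\omega\Phi_n$ with $\omega=\frac{2(1+\theta)}{4+\mu}$. Iterating this and invoking (a) then delivers the asserted bound.
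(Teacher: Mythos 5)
Your proposal is correct and is essentially the paper's own proof in a different packaging: the paper's Fej\'er-type inequality \eqref{fej2}, multiplied through by $\omega^{n+1}$, is exactly your one-step contraction $\Phi_{n+1}\leq\omega\Phi_n$ for precisely your functional $\Phi_n$, and your step (a) is the paper's closing Young-inequality step applied to the telescoped sum. The only detail to watch in your step (b) is that the index-$n$ coupling term must be extracted with coefficient $\omega$ rather than $\theta$ (otherwise the right-hand side is not $\omega\Phi_n$): the paper splits $\theta=\omega+(\theta-\omega)$ and absorbs the leftover $(\theta-\omega)\langle L_i(x_{n+1}-x_n),v_{i,n+1}-\ol v_i\rangle$ into the surplus $(1+\mu/2)-\omega^{-1}=(\theta-\omega)/\omega$ in the coefficient of $a_{n+1}$, which is exactly where the hypotheses $\theta\geq 2/(2+\mu)$, i.e.\ $\omega\leq\theta$, and the bound $\mu^2\sum_{i=1}^m\|L_i\|^2/\delta_i\leq\gamma$ are consumed.
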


\begin{proof}
For any $n \geq 0$ we have
\begin{equation}\label{J-A-acc2}
\frac{1}{\tau}(x_{n+1}-x_{n+2})-\left(\sum_{i=1}^{m}L_i^*v_{i,n+1}+Cx_{n+1}-z\right)+Cx_{n+2}\in (A+C)x_{n+2},
\end{equation}
thus, since $A+C$ is $\gamma$-strongly monotone, \eqref{sum-k-dual-C-D-acc2} yields
\begin{eqnarray*}
& & \gamma\|x_{n+2}-\ol x\|^2\leq  \left\langle x_{n+2}-\ol x, \frac{1}{\tau}(x_{n+1}-x_{n+2}) \right \rangle + \\
& & \left\langle x_{n+2}-\ol x, -\left(\sum_{i=1}^{m}L_i^*v_{i,n+1}+Cx_{n+1}-z\right)+Cx_{n+2}-\left(z-\sum_{i=1}^{m}L_i^*\ol v_i\right)\right\rangle = \\
& & \frac{1}{\tau}\left\langle x_{n+2}-\ol x,x_{n+1}-x_{n+2}\right\rangle+\langle x_{n+2}-\ol x,Cx_{n+2}-Cx_{n+1}\rangle+\\
& & \sum_{i=1}^{m}\left\langle L_i(x_{n+2}-\ol x),\ol v_i-v_{i,n+1}\right\rangle.
\end{eqnarray*}

Further, by using \eqref{hilb} and $$\langle x_{n+2}-\ol x,Cx_{n+2}-Cx_{n+1}\rangle\leq \frac{\gamma}{2}\|x_{n+2}-\ol x\|^2+\frac{\eta^2}{2\gamma}\|x_{n+2}-x_{n+1}\|^2,$$
we get for any $n \geq 0$
\begin{eqnarray*}
& & \left(\frac{1}{2\tau}+\frac{\gamma}{2}\right)\|x_{n+2}-\ol x\|^2 \leq  \\
& & \frac{1}{2\tau}\|x_{n+1}-\ol x\|^2 - \left(\frac{1}{2\tau}-\frac{\eta^2}{2\gamma}\right)\|x_{n+2}-x_{n+1}\|^2 + \sum_{i=1}^{m}\langle L_i(x_{n+2}-\ol x),\ol v_i-v_{i,n+1}\rangle.
\end{eqnarray*}
After multiplying this inequality with $\mu$ and taking into account that
$$\frac{\mu}{2\tau}=\gamma, \mu\left(\frac{1}{2\tau}+\frac{\gamma}{2}\right)=\gamma\left(1+\frac{\mu}{2}\right) \ \mbox{and} \ \mu\left(\frac{1}{2\tau}-\frac{\eta^2}{2\gamma}\right)=\gamma-\frac{\eta^2}{2\gamma}\mu\geq \frac{\gamma}{2},$$
we obtain for any $n \geq 0$
\begin{eqnarray}\label{x-n-acc2}
& & \gamma\left(1+\frac{\mu}{2}\right)\|x_{n+2}-\ol x\|^2 \leq  \\
& & \gamma\|x_{n+1}-\ol x\|^2 - \frac{\gamma}{2}\|x_{n+2}-x_{n+1}\|^2 + \mu \sum_{i=1}^{m}\langle L_i(x_{n+2}-\ol x),\ol v_i-v_{i,n+1}\rangle. \nonumber
\end{eqnarray}

On the other hand, for every $i=1,...,m$ and any $n \geq 0$, from
\begin{equation}\label{J-B-i-acc2} \frac{1}{\sigma_i}(v_{i,n}-v_{i,n+1})+L_iy_n-D_i^{-1}v_{i,n}-r_i+D_i^{-1}v_{i,n+1}\in (B_i^{-1}+D_i^{-1})v_{i,n+1},
\end{equation}
the $\delta_i$-strong monotonicity of $B_i^{-1}+D_i^{-1}$ and \eqref{sum-k-dual-C-D-acc2} we obtain
\begin{eqnarray*}
\delta_i\|v_{i,n+1}- \ol v_i\|^2 & \leq &  \left\langle \frac{1}{\sigma_i}(v_{i,n}-v_{i,n+1}), v_{i,n+1}-\ol v_i\right\rangle \\
                                 & + & \left\langle L_iy_n-r_i-D_i^{-1}v_{i,n}+D_i^{-1}v_{i,n+1}-(L_i\ol x-r_i), v_{i,n+1}-\ol v_i\right\rangle.
\end{eqnarray*}
Further, for every $i=1,...,m$ and any $n \geq 0$ we have
$$\frac{1}{\sigma_i}\langle v_{i,n}-v_{i,n+1},v_{i,n+1}-\ol v_i\rangle=\frac{1}{2\sigma_i}\|v_{i,n}-\ol v_i\|^2-\frac{1}{2\sigma_i}\|v_{i,n}-v_{i,n+1}\|^2-\frac{1}{2\sigma_i}\|v_{i,n+1}-\ol v_i\|^2$$
and, since $D_i^{-1}$ is a $\nu_i$-Lipschitzian operator,
$$\langle D_i^{-1}v_{i,n+1}-D_i^{-1}v_{i,n}, v_{i,n+1}-\ol v_i\rangle\leq \frac{\delta_i}{2}\|v_{i,n+1}-\ol v_i\|^2+\frac{\nu_i^2}{2\delta_i}\|v_{i,n+1}-v_{i,n}\|^2.$$
Consequently, for every $i=1,...,m$ and any $n \geq 0$ it holds
\begin{eqnarray*}
& & \left(\frac{1}{2\sigma_i}+\frac{\delta_i}{2}\right)\|v_{i,n+1}-\ol v_i\|^2 \leq  \\
& & \frac{1}{2\sigma_i}\|v_{i,n}-\ol v_i\|^2 - \left(\frac{1}{2\sigma_i}-\frac{\nu_i^2}{2\delta_i}\right)\|v_{i,n+1}-v_{i,n}\|^2 + \langle L_i(\ol x-y_n),\ol v_i-v_{i,n+1}\rangle,
\end{eqnarray*}
which, after multiplying it by $\mu$ (here is the initial choice of $\mu$ determinant), yields
\begin{equation}\label{v-n-acc2}
\delta_i\left(1+\frac{\mu}{2}\right)\|v_{i,n+1}-\ol v_i\|^2 \leq  \delta_i\|v_{i,n}-\ol v_i\|^2 - \frac{\delta_i}{2}\|v_{i,n+1}-v_{i,n}\|^2 + \mu \langle L_i(\ol x-y_n),\ol v_i-v_{i,n+1}\rangle.
\end{equation}
We denote
$$a_n:=\gamma\|x_{n+1}-\ol x\|^2+\sum_{i=1}^{m}\delta_i\|v_{i,n}-\ol v_i\|^2 \ \forall n \geq 0.$$
Summing up the inequalities in \eqref{x-n-acc2} and \eqref{v-n-acc2} we obtain for any $n \geq 0$
\begin{eqnarray}\label{an,n+1}
& & \left(1+\frac{\mu}{2}\right)a_{n+1} \leq  a_n\\
& & -\frac{\gamma}{2}\|x_{n+2}- x_{n+1}\|^2-\sum_{i=1}^{m}\frac{\delta_i}{2}\|v_{i,n}- v_{i,n+1}\|^2 + \mu \sum_{i=1}^{m}\langle L_i(x_{n+2}-y_n),\ol v_i-v_{i,n+1}\rangle.\nonumber
\end{eqnarray}
Further, since $y_n=x_{n+1}+\theta(x_{n+1}- x_n)$ and $\omega \leq \theta$, for every $i=1,...,m$ and any $n \geq 0$ it holds
\begin{eqnarray*}
& & \langle L_i(x_{n+2}-y_n),\ol v_i - v_{i,n+1}\rangle=\langle L_i\left(x_{n+2}-x_{n+1} - \theta(x_{n+1}- x_n)\right),\ol v_i - v_{i,n+1}\rangle=\\
& & \langle L_i(x_{n+2}-x_{n+1}),\ol v_i - v_{i,n+1}\rangle-\omega\langle L_i(x_{n+1}-x_n),\ol v_i - v_{i,n}\rangle+ \\
& & \omega\langle L_i(x_{n+1}-x_n),v_{i,n+1}- v_{i,n}\rangle + (\theta-\omega)\langle L_i(x_{n+1}-x_n),v_{i,n+1}-\ol v_i\rangle \leq\\
& & \langle L_i(x_{n+2}-x_{n+1}),\ol v_i - v_{i,n+1}\rangle-\omega\langle L_i(x_{n+1}-x_n),\ol v_i - v_{i,n}\rangle+\\
& & \omega \|L_i\|\left(\mu\omega\|L_i\|\frac{\|x_{n+1}-x_n\|^2}{2\delta_i}+\delta_i\frac{\|v_{i,n+1}-v_{i,n}\|^2}{2\mu\omega\|L_i\|}\right) + \\
& & (\theta-\omega)\|L_i\|\left(\mu\omega\|L_i\|\frac{\|x_{n+1}-x_n\|^2}{2\delta_i}+\delta_i\frac{\|v_{i,n+1}-\ol v_i\|^2}{2\mu\omega\|L_i\|}\right)=\\
& & \langle L_i(x_{n+2}-x_{n+1}),\ol v_i - v_{i,n+1}\rangle-\omega\langle L_i(x_{n+1}-x_n),\ol v_i - v_{i,n}\rangle+\\
& & \theta \mu \omega \|L_i\|^2\frac{\|x_{n+1}-x_n\|^2}{2\delta_i} + \delta_i\frac{\|v_{i,n+1}-v_{i,n}\|^2}{2\mu} + (\theta-\omega) \delta_i \frac{\|v_{i,n+1}-\ol v_i\|^2}{2\mu \omega}.
\end{eqnarray*}
Taking into consideration that
$$\frac{\mu^2\theta\omega}{2}\sum_{i=1}^m\frac{\|L_i\|^2}{\delta_i}\leq \frac{\gamma\theta}{2}\omega\leq \frac{\gamma}{2}\omega \ \mbox{and} \ 1+\frac{\mu}{2}=\frac{1}{\omega}+\frac{\theta-\omega}{\omega},$$
from \eqref{an,n+1} we obtain for any $n \geq 0$
\begin{eqnarray*}
& & \frac{1}{\omega}a_{n+1}  + \frac{\gamma}{2}\|x_{n+2}- x_{n+1}\|^2 \leq\\
& & a_n + \frac{\gamma}{2}\omega \|x_{n+1}-x_{n}\|^2 - \frac{\theta-\omega}{\omega}\left(a_{n+1}-\sum_{i=1}^{m}\frac{\delta_i}{2}\|v_{i,n+1}-\ol v_i\|^2\right) + \\
& & \mu\sum_{i=1}^{m}\big(\langle L_i(x_{n+2}-x_{n+1}),\ol v_i - v_{i,n+1}\rangle-\omega\langle L_i(x_{n+1}-x_n),\ol v_i - v_{i,n}\rangle\big).
\end{eqnarray*}
As $\omega \leq \theta$ and $a_{n+1}-\sum_{i=1}^{m}\frac{\delta_i}{2}\|v_{i,n+1}-\ol v_i\|^2\geq 0$, we further get after multiplying the last inequality with $\omega^{-n}$ the following Fej\'er-type inequality that holds for any $n \geq 0$
\begin{eqnarray}\label{fej2}
& & \omega^{-(n+1)}a_{n+1}+\frac{\gamma}{2}\omega^{-n}\|x_{n+2}-x_{n+1}\|^2 + \mu \omega^{-n} \sum_{i=1}^{m} \langle L_i(x_{n+2}-x_{n+1}),v_{i,n+1}-\ol v_i\rangle \leq \nonumber\\
& & \omega^{-n}a_n + \frac{\gamma}{2}\omega^{-(n-1)}\|x_{n+1}-x_{n}\|^2 + \mu\omega^{-(n-1)}\sum_{i=1}^{m} \langle L_i(x_{n+1}-x_n),v_{i,n}-\ol v_i\rangle.
\end{eqnarray}
Let $N\in\N, N\geq 2$. Summing up the inequality in \eqref{fej2} from $n=0$ to $N-1$, it yields
\begin{eqnarray*}
& & \omega^{-N}a_N+\frac{\gamma}{2}\omega^{-N+1}\|x_N-x_{N+1}\|^2 + \mu \omega^{-N+1} \sum_{i=1}^{m} \langle L_i(x_{N+1}-x_{N}),v_{i,N}-\ol v_i\rangle \leq\\
& & a_0 + \frac{\gamma}{2}\omega\|x_1-x_0\|^2 + \mu \omega \sum_{i=1}^{m} \langle L_i(x_{1}-x_{0}),v_{i,0}-\ol v_i\rangle.
\end{eqnarray*}
Using that
$$\langle L_i(x_{N+1}-x_N),v_{i,N}-\ol v_i\rangle\geq -\frac{\mu\|L_i\|^2}{4\delta_i}\|x_{N+1}-x_N\|^2-\frac{\delta_i}{\mu}\|v_{i,N}-\ol v_i\|^2, \ i=1,...,m,$$
this further yields
\begin{eqnarray}\label{eqfin}
& & \omega^{-N}a_N+\omega^{-N+1}\left(\frac{\gamma}{2}-\frac{\mu^2}{4}\sum_{i=1}^{m}\frac{\|L_i\|^2}{\delta_i}\right)\|x_N-x_{N+1}\|^2 -\omega^{-N+1}\sum_{i=1}^{m}\delta_i\|v_{i,N}-\ol v_i\|^2 \leq \nonumber\\
& & a_0 + \frac{\gamma}{2}\omega\|x_1-x_0\|^2 + \mu \omega \sum_{i=1}^{m} \langle L_i(x_{1}-x_{0}),v_{i,0}-\ol v_i\rangle.
\end{eqnarray}
Taking into account the way $\mu$ has been chosen, we have
$$\frac{\gamma}{2}-\frac{\mu^2}{4}\sum_{i=1}^{m}\frac{\|L_i\|^2}{\delta_i}\geq \frac{\gamma}{2}-\frac{\gamma}{4}>0,$$ hence, after multiplying \eqref{eqfin}  with $\omega^{-N}$ it yields
\begin{equation*}
 a_N -\omega \sum_{i=1}^{m}\delta_i\|v_{i,N}-\ol v_i\|^2 \leq  \omega^N \left(a_0 + \frac{\gamma}{2}\omega\|x_1-x_0\|^2 + \mu \omega \sum_{i=1}^{m} \langle L_i(x_{1}-x_{0}),v_{i,0}-\ol v_i\rangle \right).
\end{equation*}
The conclusion follows by taking into account the definition of the sequence $(a_n)_{n \geq 0}$.
\end{proof}

\begin{remark}\label{unique-x-v} If $A+C$ is $\gamma$-strongly monotone for $\gamma>0$ and $B_i^{-1}+D_i^{-1}$ is $\delta_i$-strongly monotone for $\delta_i>0$, $i=1,...,m$, then there exists at most one primal-dual solution to Problem \ref{pr3}. Hence, if $(\ol x,\ol v_1,...,\ol v_m)$ is a primal-dual solution to Problem \ref{pr3}, then $\ol x$ is the unique solution to the primal inclusion \eqref{sum-k-primal-C-D-acc2} and $(\ol v_1,...,\ol v_m)$ is the unique solution to the dual inclusion \eqref{sum-k-dual-C-D-acc2}.
\end{remark}

\section{Convex optimization problems}\label{sec-opt-pb}

The aim of this section is to show that the two algorithms proposed in this paper and investigated from the point of view of their convergence properties can be employed when solving a primal-dual pair of convex optimization problems.

For a function $f:{\cal H}\rightarrow\overline{\R}$, where $\overline{\R}:=\R\cup\{\pm\infty\}$ is the extended real line, we denote by $\dom f=\{x\in {\cal H}:f(x)<+\infty\}$ its \textit{effective domain} and say that $f$ is \textit{proper} if $\dom f\neq\emptyset$ and $f(x)\neq-\infty$ for all $x\in {\cal H}$. We denote by $\Gamma({\cal H})$ the family of proper convex and lower semi-continuous extended real-valued functions defined on ${\cal H}$. Let $f^*:{\cal H} \rightarrow \overline \R$, $f^*(u)=\sup_{x\in {\cal H}}\{\langle u,x\rangle-f(x)\}$ for all $u\in {\cal H}$, be the \textit{conjugate function} of $f$. The \textit{subdifferential} of $f$ at $x\in {\cal H}$, with $f(x)\in\R$, is the set $\partial f(x):=\{v\in {\cal H}:f(y)\geq f(x)+\langle v,y-x\rangle \ \forall y\in {\cal H}\}$. We take by convention $\partial f(x):=\emptyset$, if $f(x)\in\{\pm\infty\}$.  Notice that if $f\in\Gamma({\cal H})$, then $\partial f$ is a maximally monotone operator (cf. \cite{rock}) and it holds $(\partial f)^{-1} = \partial f^*$. For $f,g:{\cal H}\rightarrow \overline{\R}$ two proper functions, we consider their \textit{infimal convolution}, which is the function $f\Box g:{\cal H}\rightarrow\B$, defined by $(f\Box g)(x)=\inf_{y\in {\cal H}}\{f(y)+g(x-y)\}$, for all $x\in {\cal H}$.

Let $S\subseteq {\cal H}$ be a nonempty set. The \textit{indicator function} of $S$, $\delta_S:{\cal H}\rightarrow \overline{\R}$, is the function which takes the value $0$ on $S$ and $+\infty$ otherwise. The subdifferential of the indicator function is the \textit{normal cone} of $S$, that is $N_S(x)=\{u\in {\cal H}:\langle u,y-x\rangle\leq 0 \ \forall y\in S\}$, if $x\in S$ and $N_S(x)=\emptyset$ for $x\notin S$.

When $f\in\Gamma({\cal H})$ and $\gamma > 0$, for every $x \in {\cal H}$ we denote by $\prox_{\gamma f}(x)$ the \textit{proximal point} of parameter $\gamma$ of $f$ at $x$, which is the unique optimal solution of the optimization problem
\begin{equation}\label{prox-def}\inf_{y\in {\cal H}}\left \{f(y)+\frac{1}{2\gamma}\|y-x\|^2\right\}.
\end{equation}
Notice that $J_{\gamma\partial f}=(\id_{\cal H}+\gamma\partial f)^{-1}=\prox_{\gamma f}$, thus  $\prox_{\gamma f} :{\cal H} \rightarrow {\cal H}$ is a single-valued operator fulfilling the extended \textit{Moreau's decomposition formula}
\begin{equation}\label{prox-f-star}
\prox\nolimits_{\gamma f}+\gamma\prox\nolimits_{(1/\gamma)f^*}\circ\gamma^{-1}\id\nolimits_{\cal H}=\id\nolimits_{\cal H}.
\end{equation}
Let us also recall that the function $f:{\cal H} \rightarrow \overline \R$ is said to be \textit{$\gamma$-strongly convex} for $\gamma >0$, if $f-\frac{\gamma}{2}\|\cdot\|^2$ is a convex function. Let us mention that this property implies $\gamma$-strong monotonicity of $\partial f$ (see \cite[Example 22.3]{bauschke-book}).

Finally, we notice that for $f=\delta_S$, where $S\subseteq {\cal H}$ is a nonempty convex and closed set, it holds
\begin{equation}\label{projection}
J_{\gamma N_S}=J_{N_S}=J_{\partial \delta_S} = (\id\nolimits_{\cal H}+N_S)^{-1}=\prox\nolimits_{\delta_S}=P_S,
\end{equation}
where  $P_S :{\cal H} \rightarrow C$ denotes the \textit{projection operator} on $S$ (see \cite[Example 23.3 and Example 23.4]{bauschke-book}).

In order to investigate the applicability of the algorithm introduced in Subsection \ref{subsec2.1} we consider the following primal-dual pair of convex optimization problems.

\begin{problem}\label{pr4} Let ${\cal H}$ be a real Hilbert space, $z\in {\cal H}$, $f\in\Gamma({\cal H})$ and $h:{\cal H}\rightarrow \R$ a convex and differentiable function with a $\eta$-Lipschitzian gradient for $\eta>0$. Let $m$ be a strictly positive integer and for any $i\in\{1,...,m\}$ let ${\cal G}_i$  be a real Hilbert space, $r_i\in {\cal G}_i$, $g_i\in\Gamma({\cal G}_i)$ and $L_i:{\cal H}\rightarrow$ ${\cal G}_i$ a nonzero linear continuous operator. Consider the convex optimization problem
\begin{equation}\label{sum-k-prim-f}
\inf_{x\in {\cal H}}\left\{f(x)+\sum_{i=1}^{m}g_i(L_ix-r_i)+h(x)-\langle x,z\rangle\right\}
\end{equation}
and its \textit{Fenchel-type dual} problem
\begin{equation}\label{sum-k-dual-f}
\sup_{v_i\in {\cal{G}}_i, i=1,...,m}\left\{-\big(f^*\Box h^*\big)\left(z-\sum_{i=1}^{m}L_i^*v_i\right)-\sum_{i=1}^{m}\big(g_i^*(v_i)+\langle v_i,r_i\rangle\big) \right\}.
\end{equation}
\end{problem}

Considering maximal monotone operators
$$A=\partial f, C=\nabla h \ \mbox{and} \ B_i=\partial g_i, i=1,...,m,$$
the monotone inclusion problem \eqref{sum-k-primal-C} reads
\begin{equation}\label{sum-k-primal-C-f}
\mbox{find } \ol x \in {\cal H} \ \mbox{such that} \ z\in \partial f(\ol x)+ \sum_{i=1}^{m}L_i^*(\partial g_i(L_i \ol x-r_i))+ \nabla h(\ol x),
\end{equation}
while the dual inclusion problem \eqref{sum-k-dual-C} reads
\begin{equation}\label{sum-k-dual-C-f}
\mbox{ find } \ol v_1 \in {\cal G}_1,...,\ol v_m \in {\cal G}_m \ \mbox{such that } \exists x\in {\cal H}: \
\left\{
\begin{array}{ll}
z-\sum_{i=1}^{m}L_i^*\ol v_i\in \partial f(x)+\nabla h(x)\\
\ol v_i\in \partial g_i(L_ix-r_i), i=1,...,m.
\end{array}\right.
\end{equation}

If $(\ol x, \ol v_1,...,\ol v_m)\in{\cal H} \times$ ${\cal{G}}_1 \times...\times {\cal{G}}_m$ is a primal-dual solution to \eqref{sum-k-primal-C-f}-\eqref{sum-k-dual-C-f}, namely,
\begin{equation}\label{prim-dual-f}z-\sum_{i=1}^{m}L_i^*\ol v_i\in \partial f(\ol x)+\nabla h(\ol x) \mbox{ and }\ol v_i\in \partial g_i(L_i\ol x-r_i), i=1,...,m,\end{equation}
then $\ol x$ is an optimal solution of the problem \eqref{sum-k-prim-f}, $(\ol v_1,...,\ol v_m)$ is an optimal solution of \eqref{sum-k-dual-f} and the optimal objective values of the two problems coincide. Notice that \eqref{prim-dual-f} is nothing else than the system of optimality conditions for the primal-dual pair of convex optimization problems \eqref{sum-k-prim-f}-\eqref{sum-k-dual-f}.

In case a qualification condition is fulfilled, these optimality conditions are also necessary. For the readers convenience, let us present some qualification conditions which are suitable in this context. One of the weakest qualification conditions of interiority-type reads (see, for instance, \cite[Proposition 4.3, Remark 4.4]{combettes-pesquet})
\begin{equation}\label{reg-cond} (r_1,...,r_m)\in\sqri\left(\prod_{i=1}^{m}\dom g_i-\{(L_1x,...,L_mx):x\in \dom f\}\right).
\end{equation}
Here, for ${\cal H}$ a real Hilbert space and $S\subseteq {\cal H}$ a convex set,  we denote by
$$\sqri S:=\{x\in S:\cup_{\lambda>0}\lambda(S-x) \ \mbox{is a closed linear subspace of} \ {\cal H}\}$$
its \textit{strong quasi-relative interior}. Notice that we always have $\inte S\subseteq\sqri S$ (in general this inclusion may be strict). If ${\cal H}$ is finite-dimensional, then $\sqri S$ coincides with $\ri S$, the relative interior of $S$, which is the interior of $S$ with respect to its affine hull.
The condition \eqref{reg-cond} is fulfilled if (i) $\dom g_i={\cal{G}}_i$, $i=1,...,m$ or (ii) ${\cal H}$ and ${\cal{G}}_i$ are finite-dimensional and there exists $x\in\ri\dom f$ such that $L_ix-r_i\in\ri\dom g_i$, $i=1,...,m$ (see \cite[Proposition 4.3]{combettes-pesquet}). Another useful and easily verifiable qualification condition guaranteing the optimality conditions \eqref{prim-dual-f} has the following formulation: there exists $x'\in \dom f \cap \bigcap_{i=1}^m L_i^{-1}(r_i+\dom g_i)$ such that $g_i$ is continuous at $L_ix'-r_i$, $i=1,...,m$ (see \cite[Remark 2.5]{b-hab} and \cite{b-c-h}).

The following two statements are particular instances of Algorithm \ref{alg-k-op-acc1} and Theorem \ref{th-k-op-acc1}, respectively.

\begin{algorithm}\label{alg-f}$ $

\noindent\begin{tabular}{rl}
\verb"Initialization": & \verb"Choose" $\tau_0>0, \sigma_{i,0}>0$, $i=1,..,m$, \verb"such that"\\
                       & $\tau_0<2\gamma/\eta$, $\lambda\geq \eta+1$, $\tau_0\sum_{i=1}^m \sigma_{i,0}\|L_i\|^2\leq \sqrt{1+\tau_0(2\gamma-\eta\tau_0)/\lambda}$\\
                       & \verb"and" $(x_0,v_{1,0},...,v_{m,0}) \in {\cal H} \times $ ${\cal G}_1$ $\times...\times$ ${\cal G}_m$. \\
\verb"For" $n\geq 0$ \verb"set": & $x_{n+1}=\prox_{(\tau_n/\lambda)  f}\big[x_n-(\tau_n/\lambda)\big(\sum_{i=1}^{m}L_i^*v_{i,n}+\nabla h (x_n)-z\big)\big]$\\
                                 & $\theta_n=1/\sqrt{1+\tau_n(2\gamma-\eta\tau_n)/\lambda}$\\
                                 & $y_n=x_{n+1}+\theta_n (x_{n+1}- x_n)$\\
                                 & $v_{i,n+1}=\prox_{\sigma_{i,n} g_i^*}[v_{i,n}+\sigma_{i,n}(L_iy_n-r_i)]$, $i=1,...,m$\\
                                 & $\tau_{n+1}=\theta_n\tau_n$, $\sigma_{i,n+1}=\sigma_{i,n}/\theta_{n+1}$, $i=1,...,m.$
\end{tabular}
\end{algorithm}

\begin{theorem}\label{th-f} Suppose that $f+h$ is $\gamma$-strongly convex for $\gamma>0$ and the qualification condition \eqref{reg-cond} holds. Then there exists a unique optimal solution $\ol x$ to \eqref{sum-k-prim-f}, an optimal solution $(\ol v_1,...,\ol v_m)$ to \eqref{sum-k-dual-f} fulfilling the optimality conditions \eqref{prim-dual-f} and such that the optimal objective values of the problems \eqref{sum-k-prim-f} and \eqref{sum-k-dual-f} coincide. The sequences generated by Algorithm \ref{alg-f} fulfill for any $n\geq 0$
\begin{eqnarray*}
& & \frac{\lambda\|x_{n+1}-\ol x\|^2}{\tau_{n+1}^2}+\left(1-\tau_1\sum_{i=1}^m \sigma_{i,0}\|L_i\|^2\right)\sum_{i=1}^{m}\frac{\|v_{i,n}-\ol{v}_i\|^2}{\tau_1\sigma_{i,0}} \leq\\
& & \frac{\lambda\|x_1-\ol x\|^2}{\tau_1^2}+\sum_{i=1}^{m}\frac{\|v_{i,0}-\ol v_i\|^2}{\tau_1\sigma_{i,0}} +\frac{\|x_1-x_0\|^2}{\tau_0^2} + \frac{2}{\tau_0}\sum_{i=1}^{m}\langle L_i(x_1-x_0),v_{i,0}-\ol v_i\rangle.\end{eqnarray*}
Moreover, $\lim\limits_{n\rightarrow+\infty}n\tau_n=\frac{\lambda}{\gamma}$, hence one obtains for $(x_n)_{n \geq 0}$ an order of convergence of ${\cal {O}}(\frac{1}{n})$.
\end{theorem}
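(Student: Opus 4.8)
The plan is to obtain Theorem~\ref{th-f} as a direct specialization of Theorem~\ref{th-k-op-acc1}: almost everything is a dictionary translation, and the only genuinely convex-analytic point is the existence of a primal-dual solution. First I would set $A=\partial f$, $C=\nabla h$ and $B_i=\partial g_i$ for $i=1,\dots,m$. Since $f\in\Gamma({\cal H})$ and $g_i\in\Gamma({\cal G}_i)$, the operators $A$ and $B_i$ are maximally monotone; since $h$ is everywhere finite, convex and has an $\eta$-Lipschitz gradient, $C=\nabla h$ is monotone and $\eta$-Lipschitzian. Hence the standing assumptions of Problem~\ref{pr2} are met, and the inclusions \eqref{sum-k-primal-C}--\eqref{sum-k-dual-C} become precisely \eqref{sum-k-primal-C-f}--\eqref{sum-k-dual-C-f}.

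Next I would translate the strong-convexity hypothesis into strong monotonicity. Because $h$ is finite-valued and continuous, the subdifferential sum rule gives $\partial(f+h)=\partial f+\nabla h=A+C$; and $f+h$ being $\gamma$-strongly convex forces $\partial(f+h)$ to be $\gamma$-strongly monotone (by the property recalled just before Problem~\ref{pr4}). Thus $A+C$ is $\gamma$-strongly monotone, exactly the hypothesis required by Theorem~\ref{th-k-op-acc1}.

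Then I would settle the existence statement, which is the only step carrying real content. Strong convexity of $f+h$ makes the primal objective in \eqref{sum-k-prim-f} strongly convex, proper (the qualification condition \eqref{reg-cond} in particular forces the underlying constraint set to be nonempty) and lower semicontinuous, so \eqref{sum-k-prim-f} has a unique optimal solution $\ol x$; this uniqueness can alternatively be read off Remark~\ref{unique-x}. Under \eqref{reg-cond}, Fenchel-type duality --- in the form recalled via \cite[Proposition~4.3]{combettes-pesquet} --- guarantees that the dual \eqref{sum-k-dual-f} admits an optimal solution $(\ol v_1,\dots,\ol v_m)$, that the two optimal values coincide, and that $(\ol x,\ol v_1,\dots,\ol v_m)$ satisfies the optimality system \eqref{prim-dual-f}. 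Rewriting \eqref{prim-dual-f} with $A,C,B_i$ as above shows that $(\ol x,\ol v_1,\dots,\ol v_m)$ is a primal-dual solution of Problem~\ref{pr2} in the sense of \eqref{prim-dual-sol-C}.

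Finally I would identify the two algorithms and invoke Theorem~\ref{th-k-op-acc1}. The identification $J_{\gamma\partial\varphi}=\prox_{\gamma\varphi}$ gives $x_{n+1}=J_{(\tau_n/\lambda)A}[\,\cdot\,]=\prox_{(\tau_n/\lambda)f}[\,\cdot\,]$, while $(\partial g_i)^{-1}=\partial g_i^*$ together with the same identification gives $J_{\sigma_{i,n}B_i^{-1}}=J_{\sigma_{i,n}\partial g_i^*}=\prox_{\sigma_{i,n}g_i^*}$; the updates for $\theta_n,y_n,\tau_{n+1},\sigma_{i,n+1}$ and the admissibility conditions on $\tau_0,\sigma_{i,0},\lambda$ are literally the same in Algorithm~\ref{alg-f} and Algorithm~\ref{alg-k-op-acc1}. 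Consequently the sequences produced by Algorithm~\ref{alg-f} coincide with those produced by Algorithm~\ref{alg-k-op-acc1} for the data $A=\partial f$, $C=\nabla h$, $B_i=\partial g_i$, and applying Theorem~\ref{th-k-op-acc1} to the primal-dual solution just found yields the displayed Fej\'er-type estimate verbatim together with $\lim_{n\to+\infty}n\tau_n=\lambda/\gamma$, hence the $\mathcal O(\tfrac1n)$ rate for $(x_n)_{n\ge0}$. The main obstacle, such as it is, lies in the existence step: one has to be sure that \eqref{reg-cond} is strong enough to guarantee zero duality gap and attainment in \eqref{sum-k-dual-f} and to deliver \eqref{prim-dual-f}; the rest is bookkeeping.
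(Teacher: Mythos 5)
Your proposal is correct and follows exactly the route the paper takes: the paper presents Theorem~\ref{th-f} as a particular instance of Theorem~\ref{th-k-op-acc1} obtained via the dictionary $A=\partial f$, $C=\nabla h$, $B_i=\partial g_i$, with the qualification condition \eqref{reg-cond} supplying the primal-dual solution, the identification $J_{\gamma\partial\varphi}=\prox_{\gamma\varphi}$ matching the two algorithms, and uniqueness of $\ol x$ coming from strong convexity (cf.\ Remark~\ref{rmun}). In fact your write-up spells out the specialization in more detail than the paper, which leaves these verifications implicit.
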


\begin{remark}\label{rmun}
The uniqueness of the solution of \eqref{sum-k-prim-f} in the above theorem follows from \cite[Corollary 11.16]{bauschke-book}.
\end{remark}

\begin{remark}\label{h=0} In case $h(x)=0$ for all $x\in {\cal H}$, one has to choose in Algorithm \ref{alg-f} as initial points $\tau_0>0, \sigma_{i,0}>0$, $i=1,..,m$, with $\tau_0\sum_{i=1}^m \sigma_{i,0}\|L_i\|^2\leq \sqrt{1+2\tau_0\gamma/\lambda}$ and $\lambda\geq 1$ and to update the sequence $(\theta_n)_{n \geq 0}$ via $\theta_n=1/\sqrt{1+2\tau_n\gamma/\lambda}$ for any $n \geq 0$, in order to obtain a suitable iterative scheme for solving the pair of primal-dual optimization problems \eqref{sum-k-prim-f}-\eqref{sum-k-dual-f} with the same convergence behavior as of Algorithm \ref{alg-f}.
\end{remark}

We turn now our attention to the algorithm introduced in Subsection \ref{subsec2.2} and consider to this end the following primal-dual pair of convex optimization problems.
\begin{problem}\label{pr5} Let ${\cal H}$ be a real Hilbert space, $z\in {\cal H}$, $f\in\Gamma({\cal H})$ and $h:{\cal H}\rightarrow \R$ a convex and differentiable function with a $\eta$-Lipschitzian gradient for $\eta>0$. Let $m$ be a strictly positive integer and for any $i\in\{1,...,m\}$ let ${\cal G}_i$  be a real Hilbert space, $r_i\in {\cal G}_i$, $g_i, l_i \in\Gamma({\cal G}_i)$ such that $l_i$ is $\nu_i^{-1}$-strongly convex for $\nu_i > 0$ and $L_i:{\cal H}\rightarrow$ ${\cal G}_i$ a nonzero linear continuous operator. Consider the convex optimization problem
\begin{equation}\label{sum-k-prim-f2}
\inf_{x\in {\cal H}}\left\{f(x)+\sum_{i=1}^{m}(g_i \Box l_i)(L_ix-r_i)+h(x)-\langle x,z\rangle\right\}
\end{equation}
and its \textit{Fenchel-type dual} problem
\begin{equation}\label{sum-k-dual-f2}
\sup_{v_i\in {\cal{G}}_i, i=1,...,m}\left\{-\big(f^*\Box h^*\big)\left(z-\sum_{i=1}^{m}L_i^*v_i\right)-\sum_{i=1}^{m}\big(g_i^*(v_i)+l_i^*(v_i) + \langle v_i,r_i\rangle\big) \right\}.
\end{equation}
\end{problem}

Considering the maximal monotone operators
$$A=\partial f, C=\nabla h, B_i=\partial g_i \ \mbox{and} \ D_i=\partial l_i, i=1,...,m,$$
according to \cite[Proposition 17.10, Theorem 18.15]{bauschke-book}, $D_i^{-1} = \nabla l_i^*$ is a monotone and $\nu_i$-Lipschitzian operator for $i=1,...,m$. The monotone inclusion problem \eqref{sum-k-primal-C-D-acc2} reads
\begin{equation}\label{sum-k-primal-C-D-f}
\mbox{find } \ol x \in {\cal H} \ \mbox{such that} \ z\in \partial f(\ol x)+ \sum_{i=1}^{m}L_i^*((\partial g_i \Box \partial l_i) (L_i \ol x-r_i))+ \nabla h(\ol x),
\end{equation}
while the dual inclusion problem \eqref{sum-k-dual-C-D-acc2} reads
\begin{equation}\label{sum-k-dual-C-D-f}
\mbox{ find } \ol v_1 \in {\cal G}_1,...,\ol v_m \in {\cal G}_m \ \mbox{such that } \exists x\in {\cal H}: \
\left\{
\begin{array}{ll}
z-\sum_{i=1}^{m}L_i^*\ol v_i\in \partial f(x)+\nabla h(x)\\
\ol v_i\in (\partial g_i \Box \partial l_i) (L_ix-r_i), i=1,...,m.
\end{array}\right.
\end{equation}

If $(\ol x, \ol v_1,...,\ol v_m)\in{\cal H} \times$ ${\cal{G}}_1 \times...\times {\cal{G}}_m$ is a primal-dual solution to \eqref{sum-k-primal-C-D-f}-\eqref{sum-k-dual-C-D-f}, namely,
\begin{equation}\label{prim-dual-f2}z-\sum_{i=1}^{m}L_i^*\ol v_i\in \partial f(\ol x)+\nabla h(\ol x) \mbox{ and }\ol v_i\in (\partial g_i \Box \partial l_i)(L_i\ol x-r_i), i=1,...,m,\end{equation}
then $\ol x$ is an optimal solution of the problem \eqref{sum-k-prim-f2}, $(\ol v_1,...,\ol v_m)$ is an optimal solution of \eqref{sum-k-dual-f2} and the optimal objective values of the two problems coincide. Notice that \eqref{prim-dual-f2} is nothing else than the system of optimality condition for the primal-dual pair of convex optimization problems \eqref{sum-k-prim-f2}-\eqref{sum-k-dual-f2}.

The assumptions made on $l_i$ guarantees that $g_i \Box l_i \in \Gamma({\cal G}_i)$ (see \cite[Corollary 11.16, Proposition 12.14]{bauschke-book}) and, since $\dom (g_i \Box l_i) = \dom g_i + \dom l_i, i=1,...,m$, one can can consider the following qualification condition of interiority-type in order to guarantee  \eqref{prim-dual-f2}
\begin{equation}\label{reg-cond2}
(r_1,...,r_m)\in\sqri\left(\prod_{i=1}^{m}(\dom g_i + \dom l_i)-\{(L_1x,...,L_mx):x\in \dom f\}\right).
\end{equation}

The following two statements are particular instances of Algorithm \ref{alg-k-op-acc2} and Theorem \ref{th-k-op-acc2}, respectively.

\begin{algorithm}\label{alg-f2}$ $

\noindent\begin{tabular}{rl}
\verb"Initialization": & \verb"Choose" $\mu >0$ \verb"such that"\\
                       & $\mu\leq \min\left \{\gamma^2/\eta^2,\delta_1^2/\nu_1^2,...,\delta_m^2/\nu_m^2,\sqrt{\gamma/\left(\sum_{i=1}^{m}\|L_i\|^2/\delta_i\right)}\right\}$, \\
                       & $\tau=\mu/(2\gamma)$, $\sigma_i=\mu/(2\delta_i)$, $i=1,..,m$, \\
                       & $\theta\in[2/(2+\mu),1]$ \verb"and" $(x_0,v_{1,0},...,v_{m,0}) \in {\cal H} \times $ ${\cal G}_1$ $\times...\times$ ${\cal G}_m$.\\
\verb"For" $n\geq 0$ \verb"set": & $x_{n+1}=\prox_{\tau f}\big[x_n-\tau\big(\sum_{i=1}^{m}L_i^*v_{i,n}+\nabla h(x_n)-z\big)\big]$\\
                                 & $y_n=x_{n+1}+\theta (x_{n+1} - x_n)$\\
                                 & $v_{i,n+1}=\prox_{\sigma_i g_i^*}[v_{i,n}+\sigma_i(L_iy_n-\nabla l_i^*(v_{i,n})-r_i)]$, $i=1,...,m.$
\end{tabular}
\end{algorithm}

\begin{theorem}\label{th-f2} Suppose that $f+h$ is $\gamma$-strongly convex for $\gamma>0$, $g_i^* + l_i^*$ is $\delta_i$-strongly convex for $\delta_i > 0$, $i=1,...,m$, and the qualification condition \eqref{reg-cond2} holds. Then there exists a unique optimal solution $\ol x$ to \eqref{sum-k-prim-f2}, a unique optimal solution $(\ol v_1,...,\ol v_m)$ to \eqref{sum-k-dual-f2} fulfilling the optimality conditions \eqref{prim-dual-f2} and such that the optimal objective values of the problems \eqref{sum-k-prim-f2} and \eqref{sum-k-dual-f2} coincide. The sequences generated by Algorithm \ref{alg-f2} fulfill for any $n\geq 0$
\begin{eqnarray*}
& & \!\!\!\! \gamma\|x_{n+1}-\ol x\|^2+(1-\omega)\sum_{i=1}^{m}\delta_i\|v_{i,n}-\ol v_i\|^2\leq\\
& & \!\!\!\!\omega^n \left(\gamma\|x_1- \ol x\|^2+\sum_{i=1}^{m}\delta_i\|v_{i,0}-\ol v_i\|^2+\frac{\gamma}{2}\omega\|x_1-x_0\|^2+\mu\omega\sum_{i=1}^{m}\langle L_i(x_1-x_0),v_{i,0}-\ol v_i\rangle\right),
\end{eqnarray*}
where $0<\omega=\frac{2(1+\theta)}{4+\mu}<1$.
\end{theorem}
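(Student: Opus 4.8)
The plan is to obtain Theorem \ref{th-f2} as a direct specialization of Theorem \ref{th-k-op-acc2} to the maximally monotone operators $A=\partial f$, $C=\nabla h$, $B_i=\partial g_i$ and $D_i=\partial l_i$, $i=1,\dots,m$. First I would check that this choice really fits into the setting of Problem \ref{pr3}. The operator $C=\nabla h$ is monotone and $\eta$-Lipschitzian by the hypothesis on $h$; and, since $l_i$ is $\nu_i^{-1}$-strongly convex, by \cite[Proposition 17.10, Theorem 18.15]{bauschke-book} the function $l_i^*$ is finite-valued and differentiable with $D_i^{-1}=(\partial l_i)^{-1}=\partial l_i^*=\nabla l_i^*$ a monotone and $\nu_i$-Lipschitzian operator. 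Moreover, the iteration in Algorithm \ref{alg-f2} is literally Algorithm \ref{alg-k-op-acc2} written for these operators, because $J_{\tau\partial f}=\prox_{\tau f}$, $J_{\sigma_i B_i^{-1}}=J_{\sigma_i\partial g_i^*}=\prox_{\sigma_i g_i^*}$ (using $(\partial g_i)^{-1}=\partial g_i^*$) and $D_i^{-1}v_{i,n}=\nabla l_i^*(v_{i,n})$, while the constants $\mu,\tau,\sigma_i,\theta,\omega$ are chosen exactly as prescribed there.

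Second, I would translate the strong convexity hypotheses into the strong monotonicity hypotheses required by Theorem \ref{th-k-op-acc2}. Since $h$ is finite-valued and continuous, the subdifferential sum rule gives $A+C=\partial f+\nabla h=\partial(f+h)$, and because $f+h$ is $\gamma$-strongly convex, $\partial(f+h)$ is $\gamma$-strongly monotone by \cite[Example 22.3]{bauschke-book}; hence $A+C$ is $\gamma$-strongly monotone. Likewise, as $l_i^*$ is finite-valued and continuous, $B_i^{-1}+D_i^{-1}=\partial g_i^*+\partial l_i^*=\partial(g_i^*+l_i^*)$, and the assumed $\delta_i$-strong convexity of $g_i^*+l_i^*$ yields that $B_i^{-1}+D_i^{-1}$ is $\delta_i$-strongly monotone, $i=1,\dots,m$.

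Third, I would produce the primal-dual solution needed to invoke Theorem \ref{th-k-op-acc2}. Since $f+h$ is $\gamma$-strongly convex and each $g_i\Box l_i\in\Gamma({\cal G}_i)$ (by \cite[Corollary 11.16, Proposition 12.14]{bauschke-book}), the objective of \eqref{sum-k-prim-f2} is proper, lower semicontinuous and $\gamma$-strongly convex, so \eqref{sum-k-prim-f2} has a unique minimizer $\ol x$ (\cite[Corollary 11.16]{bauschke-book}). Under the qualification condition \eqref{reg-cond2}, Fenchel-type duality for the pair \eqref{sum-k-prim-f2}--\eqref{sum-k-dual-f2} applies (as in \cite{combettes-pesquet, b-c-h, b-hab}): strong duality holds, \eqref{sum-k-dual-f2} has an optimal solution $(\ol v_1,\dots,\ol v_m)$, and the optimality system \eqref{prim-dual-f2} is satisfied. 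But \eqref{prim-dual-f2} is exactly the statement that $(\ol x,\ol v_1,\dots,\ol v_m)$ is a primal-dual solution to Problem \ref{pr3} for the operators chosen above (using $\partial g_i\Box\partial l_i=B_i\Box D_i$). Therefore Theorem \ref{th-k-op-acc2} applies verbatim and delivers the claimed estimate with $\omega=\frac{2(1+\theta)}{4+\mu}\in(0,1)$, while the uniqueness of $(\ol v_1,\dots,\ol v_m)$ follows from Remark \ref{unique-x-v}.

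The routine part is the bookkeeping of the constants and the verification that Algorithm \ref{alg-f2} coincides with Algorithm \ref{alg-k-op-acc2}. The step that needs genuine care is the third one: one must make sure that \eqref{reg-cond2} is a qualification condition guaranteeing both the existence of a dual optimal solution and the full equivalence between optimality for \eqref{sum-k-prim-f2}--\eqref{sum-k-dual-f2} and the inclusion system \eqref{prim-dual-f2}, which is also where the subdifferential sum rules $\partial(f+h)=\partial f+\nabla h$ and $\partial(g_i^*+l_i^*)=\partial g_i^*+\partial l_i^*$ are implicitly invoked. Once this is in place, the remainder of the proof is a transcription of Theorem \ref{th-k-op-acc2}.
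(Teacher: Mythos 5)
Your proposal is correct and follows essentially the same route as the paper: Theorem \ref{th-f2} is obtained there precisely by specializing Theorem \ref{th-k-op-acc2} to $A=\partial f$, $C=\nabla h$, $B_i=\partial g_i$, $D_i=\partial l_i$, with the strong convexity assumptions translated into strong monotonicity of $\partial(f+h)$ and $\partial(g_i^*+l_i^*)$ and the qualification condition \eqref{reg-cond2} supplying the primal-dual solution. Your write-up in fact makes explicit the details (sum rules, identification of resolvents with proximal mappings, existence of the dual solution) that the paper leaves implicit.
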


\section{Numerical experiments}

In this section we illustrate the applicability of the theoretical results in the context of two numerical experiments in image processing and support vector machines classification.

\subsection{Image processing}\label{subsec4.1}

In this subsection we compare the numerical performances of Algorithm \ref{alg-f} with the ones in the iterative scheme in Theorem \ref{vu} for an image denoising problem.  To this end we treat the nonsmooth regularized convex optimization problem
\begin{align}\label{probimageproc}
\inf_{x \in \left[0,1\right]^k} \left\{ \frac{1}{2}\left\| x-b \right\|^2 + \lambda_1 TV(x) + \lambda_2\|Wx\|_1 \right\},
\end{align}
where $TV:\R^k \rightarrow \R$ denotes a discrete isotropic total variation, $W:\R^k\rightarrow\R^k$ a the discrete Haar wavelet transform with four levels, $\lambda_1, \lambda_2 >0$ the regularization parameters and $b\in\R^k$ the observed noisy image. Notice that we consider images of size $k=M \times N$ as vectors $x\in\R^k$, where each pixel denoted by $x_{i,j}$, $1\leq i \leq M$, $1\leq j \leq N$, ranges in the closed interval from $0$ (pure black) to $1$ (pure white).

Two popular choices for the discrete total variation functional are the isotropic total variation $TV_{\iso}:\R^k \rightarrow \R$,
\begin{align*}
	TV_{\iso}(x) &= \sum_{i=1}^{M-1}\sum_{j=1}^{N-1}\sqrt{ (x_{i+1,j}-x_{i,j})^2 + (x_{i,j+1}-x_{i,j})^2 } \\
								&\quad + \sum_{i=1}^{M-1} \left| x_{i+1,N}-x_{i,N} \right|  + \sum_{j=1}^{N-1} \left| x_{M,j+1}-x_{M,j} \right| ,
\end{align*}
and the anisotropic total variation $TV_{\aniso}:\R^k \rightarrow \R$,
\begin{align*}
	TV_{\aniso}(x) &= \sum_{i=1}^{M-1}\sum_{j=1}^{N-1} \left|x_{i+1,j}-x_{i,j}\right| + \left|x_{i,j+1}-x_{i,j}\right| \\
								&\quad  + \sum_{i=1}^{M-1} \left| x_{i+1,N}-x_{i,N} \right| + \sum_{j=1}^{N-1} \left| x_{M,j+1}-x_{M,j} \right| ,
\end{align*}
where in both cases reflexive (Neumann) boundary conditions are assumed. Obviously, in both situations the qualification condition in Theorem \ref{th-f} is fulfilled.

Denote $\Y=\R^k \times \R^k$ and define the linear operator $L:\R^k \rightarrow \Y$, $x_{i,j} \mapsto (L_1x_{i,j}, L_2x_{i,j})$, where
\begin{align*}
	L_1x_{i,j} = \left\{ \begin{array}{ll} x_{i+1,j}-x_{i,j}, & \text{if }i<M\\ 0, &\text{if }i=M\end{array}\right. \ \mbox{and} \
	L_2x_{i,j} = \left\{ \begin{array}{ll} x_{i,j+1}-x_{i,j}, & \text{if }j<N\\ 0, &\text{if }j=N\end{array}\right. .
\end{align*}
The operator $L$ represents a discretization of the gradient in horizontal and vertical direction. One can easily check that $\| L \|^2 \leq 8$ and that its adjoint $L^* : \Y \rightarrow \R^k$ is as easy to implement as the operator itself (cf. \cite{Cha04}). Moreover, since $W$ is an orthogonal wavelet, it holds $\|W\|=1$.

\begin{figure}[ht]\label{fig1}
\centering
\subfloat[Noisy image, $\sigma=0.06$]{
\includegraphics[width = 0.31\textwidth]{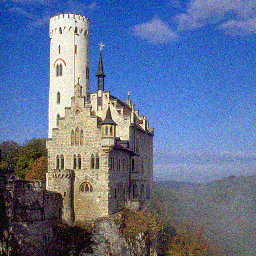}}
\hspace*{0.5cm}
\subfloat[Noisy image, $\sigma=0.12$]{
\includegraphics[width = 0.31\textwidth]{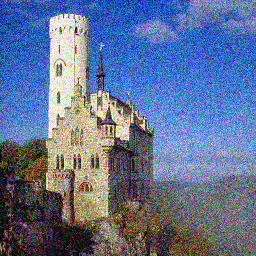}}\\
\subfloat[Denoised image, $\lambda_1=0.035$, $\lambda_2=0.01$, isotropic TV]{
\includegraphics[width = 0.31\textwidth]{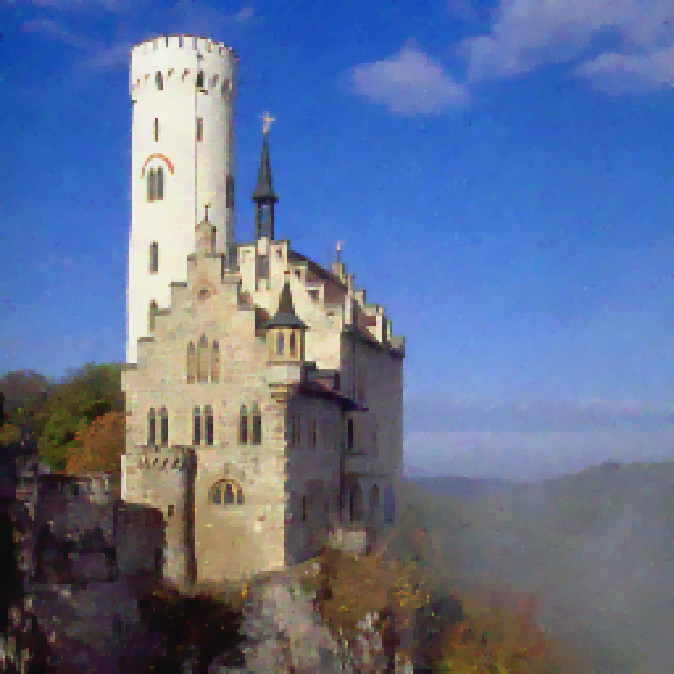}}
\hspace*{0.5cm}
\subfloat[Denoised image, $\lambda_1=0.07$, $\lambda_2=0.01$, isotropic TV]{
\includegraphics[width = 0.31\textwidth]{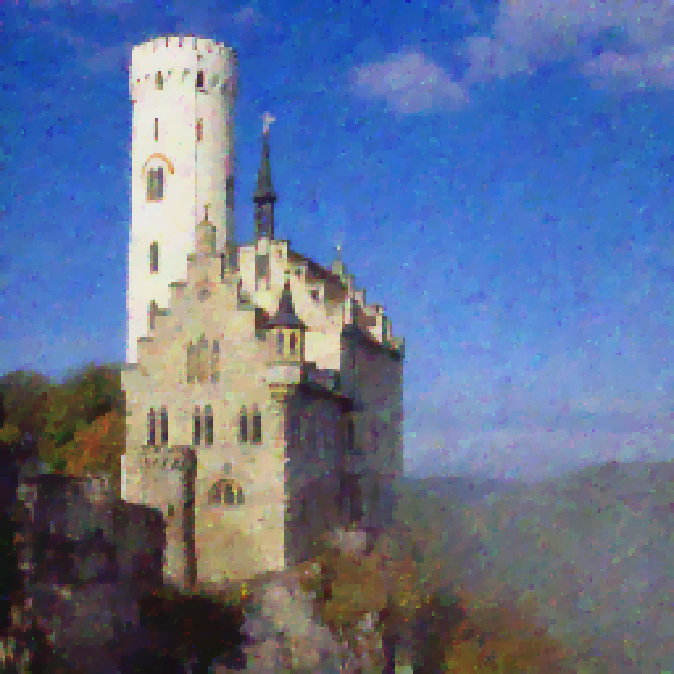}}
\caption{The noisy images in (a) and (b) were obtained after adding white Gaussian noise with standard deviation $\sigma=0.06$ and $\sigma=0.12$, respectively, to the original $256\times 256$ lichtenstein test image. The outputs of Algorithm \ref{alg-f} after $100$ iterations when solving \eqref{probimageproc} with isotropic total variation are shown in (c) and (d), respectively.}
\end{figure}

\begin{table}[ht]\centering
\begin{tabular}{lrrrr}
\toprule
\multirow{2}{*}{$\varepsilon=10^{-4}$}  & \multicolumn{2}{c}{isotropic TV} &  \multicolumn{2}{c}{anisotropic TV} \\
\cmidrule{2-5}
 & $\sigma = 0.06$ & $\sigma = 0.12$ & $\sigma = 0.06$ & $\sigma = 0.12$\\
\midrule
Algorithm in \cite{vu}             & 373 & 329 & 383 & 388\\
Algorithm \ref{alg-f} & 95  & 180 & 126 & 255\\
\bottomrule
\end{tabular}
\caption{Comparison of the algorithm in \cite{vu} and its modification, Algorithm \ref{alg-f}, when denoising the images in Figure 1 (a) and (b) for isotropic TV and anisotropic TV. The entries represent the number of iterations needed for attaining a root mean squared error for the primal iterates below the tolerance level of $\varepsilon=10^{-4}$.}
\end{table}

When considering the \textit{isotropic total variation}, the problem \eqref{probimageproc} can be formulated as
\begin{align}\label{imageiso}
\inf_{x \in \R^k} \left\{f(x) +  g_1(Lx) + g_2(Wx)\right\},
\end{align}
where $f: \R^k \rightarrow \B, f(x) = \frac{1}{2}\|x-b\|^2+\delta_{[0,1]^k}(x)$ is $1$-strongly convex, $ g_1:\Y \rightarrow \R$ is defined as $ g_1(u,v) = \lambda_1 \|(u,v)\|_{\times}$, where $\|(\cdot,\cdot)\|_{\times}:\Y \rightarrow \R$, $\|(u,v)\|_{\times}=\sum_{i=1}^M \sum_{j=1}^N \sqrt{u_{i,j}^2+v_{i,j}^2}$, is a norm on the Hilbert space $\Y$ and $g_2 : \R^k \rightarrow \R, g_2(x) = \lambda_2\|x\|_1$. Take $(p,q)\in \Y$ and $\sigma>0$. We have
$$\prox\nolimits_{\sigma f}(p) = P_{[0,1]^k}\left((1+\sigma)^{-1}(p+\sigma b)\right).$$ Moreover,
$ g_1^*(p,q) = \delta_S(p,q)$ and
$$\prox\nolimits_{\sigma g_1^*}(p,q) =  P_S \left(p,q\right), $$
where (cf. \cite{b-h})
$$ S=\left\{ (p,q)\in\Y : \max_{\substack{1\leq i \leq M\\ 1\leq j \leq N }} \sqrt{p_{i,j}^2+ q_{i,j}^2} \leq \lambda_1 \right\}$$
and the projection operator $P_S : \Y \rightarrow S$ is defined via
\begin{align*}
		(p_{i,j},q_{i,j}) \mapsto \lambda_1 \frac{(p_{i,j},q_{i,j})}{\max\left\{\lambda_1,\sqrt{p_{i,j}^2 + q_{i,j}^2}\right\}}, \ 1\leq i \leq M,\ 1\leq j \leq N.
\end{align*}
Finally, $g_2^*(p)=\delta_{\left[-\lambda_2,\lambda_2\right]^k}(p)$, hence
$$\prox\nolimits_{\sigma g_2^*}(p) = P_{\left[-\lambda_2,\lambda_2\right]^k}(p) \ \forall p \in \R^k.$$

On the other hand, when considering the \textit{anisotropic total variation}, the problem \eqref{probimageproc} can be formulated as
\begin{align}\label{imageaniso}
\inf_{x \in \R^k} \left\{f(x) + \tilde g_1(Lx) + g_2(Wx)\right\},
\end{align}
where the functions $f, g_2$ are taken as above and $\tilde g_1:\Y \rightarrow \R$ is defined as $g_1(u,v)=\lambda_1 \|(u,v)\|_1$. For every $(p,q)\in\Y$ we have $\tilde g_1^*(p) = \delta_{\left[-\lambda_1, \lambda_1\right]^k \times \left[-\lambda_1, \lambda_1\right]^k}(p,q)$ and
$$\prox\nolimits_{\sigma \tilde g_1^*}(p,q) = P_{\left[-\lambda_1, \lambda_1\right]^k \times \left[-\lambda_1, \lambda_1\right]^k}(p,q).$$

\begin{figure}
\centering
\begin{tikzpicture}[xscale=0.4,yscale=2.0]
\draw[thick] (0,0) rectangle (19,1.4);

\foreach \x in {0,4,9,14}
{ \draw[thick] (\x,0.0) -- (\x,0.05);
}
\draw (0,-0.1) node[scale=0.8]{$1$};
\draw (4,-0.1) node[scale=0.8]{$5$};
\draw (9,-0.1) node[scale=0.8]{$10$};
\draw (14,-0.1) node[scale=0.8]{$15$};
\draw (19,-0.1) node[scale=0.8]{$20$};
\foreach \x in {0.5,1}
{ \draw[thick] (0.0,\x) -- (0.2,\x);
}
\draw (-0.6,0.5) node[scale=0.8]{$0.5$};
\draw (-0.6,1.0) node[scale=0.8]{$1.0$};

\draw[thin] plot file {errorIsoAcc06.dat};
\draw[thin,color=red,dashed] plot file {errorAnisoAcc06.dat};
\draw[thick,color=blue,dotted] plot file {errorIso06.dat};
\draw[thin,color=black!20!white] plot file {errorAniso06.dat};

\draw[thick,color=black!20!white] (10,1.2) -- (10.7,1.2);
\draw (13.0,1.2) node[scale=0.7]{anisotropic};
\draw[thick,color=blue,dotted] (10,1.05) -- (10.7,1.05);
\draw (12.65,1.05) node[scale=0.7]{isotropic};
\draw[thick,color=red,dashed] (10,0.9) -- (10.7,0.9);
\draw (14.4,0.9) node[scale=0.7]{anisotropic modified};
\draw[thick] (10,0.75) -- (10.7,0.75);
\draw (14.1,0.75) node[scale=0.7]{isotropic modified};

\draw (9.5,-0.3) node[scale=0.8]{Iterations};
\draw (-1.5,0.7) node[rotate=90,scale=0.8]{RMSE};
\end{tikzpicture}
\caption{RMSE curves for image denoising: the case of white Gaussian noise with standard deviation $\sigma=0.06$}
\end{figure}

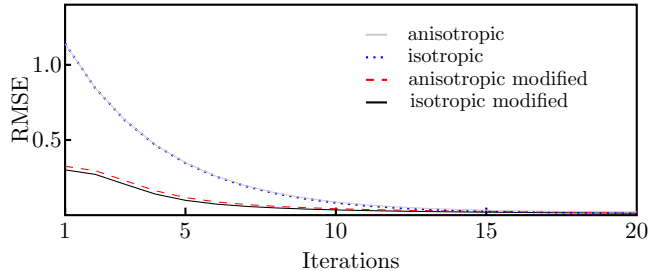
\begin{figure}
\centering
\begin{tikzpicture}[xscale=0.4,yscale=2.0]
\draw[thick] (0,0) rectangle (19,1.4);

\foreach \x in {0,4,9,14}
{ \draw[thick] (\x,0.0) -- (\x,0.05);
}
\draw (0,-0.1) node[scale=0.8]{$1$};
\draw (4,-0.1) node[scale=0.8]{$5$};
\draw (9,-0.1) node[scale=0.8]{$10$};
\draw (14,-0.1) node[scale=0.8]{$15$};
\draw (19,-0.1) node[scale=0.8]{$20$};

\foreach \x in {0.5,1}
{ \draw[thick] (0.0,\x) -- (0.2,\x);
}
\draw (-0.6,0.5) node[scale=0.8]{$0.5$};
\draw (-0.6,1.0) node[scale=0.8]{$1.0$};

\draw[thick,color=black!20!white] (10,1.2) -- (10.7,1.2);
\draw (13.0,1.2) node[scale=0.7]{anisotropic};
\draw[thick,color=blue,dotted] (10,1.05) -- (10.7,1.05);
\draw (12.65,1.05) node[scale=0.7]{isotropic};
\draw[thick,color=red,dashed] (10,0.9) -- (10.7,0.9);
\draw (14.4,0.9) node[scale=0.7]{anisotropic modified};
\draw[thick] (10,0.75) -- (10.7,0.75);
\draw (14.1,0.75) node[scale=0.7]{isotropic modified};

\draw (9.5,-0.3) node[scale=0.8]{Iterations};
\draw (-1.5,0.7) node[rotate=90,scale=0.8]{RMSE};

\draw[thin] plot file {errorIsoAcc12.dat};
\draw[thin,color=red,dashed] plot file {errorAnisoAcc12.dat};
\draw[thick,color=blue,dotted] plot file {errorIso12.dat};
\draw[thin,color=black!20!white] plot file {errorAniso12.dat};
\end{tikzpicture}
\caption{RMSE curves for image denoising: the case of white Gaussian noise with standard deviation $\sigma=0.12$}
\end{figure}

We experimented with the $256 \times 256$ \textit{lichtenstein test image} to which we added white Gaussian noise with standard deviation $\sigma=0.06$ and $\sigma=0.12$, respectively. We solved \eqref{probimageproc} by Algorithm \ref{alg-f} (with the modifications mentioned in Remark \ref{h=0}) for both instances of the discrete total variation functional. For the first noisy image (added noise with standard deviation $\sigma=0.06$), we took as regularization parameters $\lambda_1=0.035$ and $\lambda_2=0.01$ and for the second one (added noise with standard deviation $\sigma=0.12$), $\lambda_1=0.07$ and $\lambda_2=0.01$. As initial choices in Algorithm \ref{alg-f} we opted for $\lambda=1$, $\tau_0 = 50, \sigma_{1,0} = 0.0241$ and $\sigma_{2,0} = 0.008$.  The reconstructed images after $100$ iterations for isotropic total variation are shown in Figure 1.

We compared Algorithm \ref{alg-f} from the point of view of the number of iterations needed for a good recovery with the iterative method from Theorem \ref{vu} (see \cite{vu}) for both isotropic and anisotropic total variation. For the iterative scheme from \cite{vu} we have chosen for both noisy images and both discrete total variation functionals the optimal initializations $\tau = 0.35, \sigma_1=0.2$ and $\sigma_2=0.01$.

The comparisons concerning the number of iterations needed for a good recovery were made via the \textit{root mean squared error (RMSE)}. We refer the reader to Table 1 for the achieved results, which show that Algorithm \ref{alg-f} outperforms the iterative scheme in \cite{vu}. Figures 2 and 3 show the evolution of the RMSE curves when solving \eqref{imageiso} and \eqref{imageaniso} with the algorithm from \cite{vu} and with its modified version, Algorithm \ref{alg-f}, respectively.

\subsection{Support vector machines classifications}\label{subsec4.2}

The numerical experiments we present in this subsection refer to the class of kernel based learning methods and address the problem of classifying images via support vector machines.

We make use of a data set of 11800 training images and 1983 test images of size $28\times 28$ from the website \textit{http://www.cs.nyu.edu/$\sim$roweis/data.html}. The problem consists in determining a decision function based on a pool of handwritten digits showing either the number eight or the number nine, labeled by $-1$ and $+1$, respectively (see Figure 4). We evaluate the quality of the decision function on a test data set by computing the percentage of misclassified images. Notice that we use only a half of the available images from the training data set, in order to reduce the computational effort.

\begin{figure}[ht]
\captionsetup[subfigure]{labelformat=empty}
\centering
\subfloat[]{
\includegraphics[scale = 0.7]{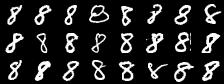}}
\hspace*{0.2cm}
\subfloat[]{
\includegraphics[scale = 0.7]{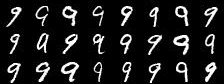}}
\caption{A sample of images belonging to the classes $-1$ and $+1$, respectively.}
\end{figure}

The classifier functional $f$ is assumed to be an element of the \textit{Reproducing Kernel Hilbert Space (RKHS)} ${{\cal H}}_\kappa$, which in our case is induced by the symmetric and finitely positive definite Gaussian kernel function $$\kappa:\R^d\times\R^d\rightarrow \R, \ \kappa(x,y)=\exp \left(-\frac{\|x-y\|^2}{2\sigma^2}\right).$$ Let $\langle\cdot,\cdot\rangle_\kappa$ be the inner product on ${{\cal H}}_\kappa$, $\|\cdot\|_\kappa$ the corresponding norm and $K\in\R^{n\times n}$ the \textit{Gram matrix} with respect to the training data set ${\cal Z}=\{(X_1,Y_1),...,(X_n,Y_n)\}\subseteq \R^d\times \{+1,-1\}$, namely the symmetric and positive definite matrix with entries $K_{ij}=\kappa(X_i,X_j)$ for $i,j=1,...,n$. We make use of the \textit{hinge loss} function $v: \R\times\R\rightarrow\R, v(x,y)=\max\{1-xy,0\}$, which penalizes the deviation between the predicted value $f(x)$ and the true value $y\in\{+1,-1\}$. The smoothness of the decision function $f\in{{\cal H}}_\kappa$ is employed by means of the \textit{smoothness functional} $\Omega:{{\cal H}}_\kappa\rightarrow\R$, $\Omega(f)=\|f\|^2_\kappa$, taking high values for non-smooth functions and low values for smooth ones. The decision function is the optimal solution of the \textit{Tikhonov regularization problem}
\begin{equation}\label{tikh}
\inf_{f\in {{\cal H}}_\kappa}\left\{\frac{1}{2}\Omega(f)+C\sum_{i=1}^n v(f(X_i),Y_i)\right\},
\end{equation}
where $C>0$ denotes the regularization parameter controlling the tradeoff between the loss function and the smoothness functional.

By taking into account the \textit{representer theorem} (see \cite{kernels-book}), there exists a vector $c=(c_1,...,c_n)^T\in\R^n$ such that the minimizer $f$ of \eqref{tikh} can be expressed as a kernel expansion in terms of the training data, namely, it holds $f(\cdot)=\sum_{i=1}^nc_i\kappa(\cdot, X_i)$. In this case the smoothness functional becomes $\Omega(f)=\|f\|_\kappa^2=\langle f,f\rangle_\kappa=\sum_{i=1}^n\sum_{j=1}^n c_ic_j\kappa(X_i,X_j)=c^TKc$ and for $i=1,...,n$ it holds $f(X_i)=\sum_{j=1}^nc_j\kappa(X_i,X_j)=(Kc)_i$. This means that in order to find the decision function it is enough to solve the convex optimization problem
\begin{equation}\label{supp}
\inf_{c\in\R^n}\left\{h(c)+\sum_{i=1}^n g_i(Kc)\right\},
\end{equation}
where $h:\R^n\rightarrow\R$, $h(c)=\frac{1}{2}c^TKc$ is convex, differentiable with $\|K\|$-Lipschitz gradient and $g_i:\R^n\rightarrow\R$, $g_i(c)=Cv(c_i,Y_i)$, $i=1,...,n,$ are convex functions. The optimization problem \eqref{supp} has the structure of \eqref{sum-k-prim-f}, where $f$ is taken to be identically $0$. Notice that in this case the function $f+h=h$ is $\gamma$-strongly convex with $\gamma=\lambda_{\min}$, where $\lambda_{\min}$ is the smallest eigenvalue of the matrix $K$. Due to the continuity of the functions $g_i, i=1,...,n$, the qualification condition required in Theorem \ref{th-f} is guaranteed. We solved \eqref{supp} by Algorithm \ref{alg-f} and used for $\mu > 0$ the following formula (see \cite{b-hb-var})
$$\prox\nolimits_{\mu g_i^*}(c)=\left(0,...,P_{Y_i[-C,0]}(c_i-\mu Y_i),...,0\right)^T.$$
With respect to the considered data set, we denote by $\cal{D}$$=\{(X_i,Y_i),i=1,...,5899\}\subseteq \R^{784}\times\{+1,-1\}$ the set of available training data consisting of $2974$ images in the class $-1$ and $2925$ images in the class $+1$. A sample from each class of images is shown in Figure 4. The images have been vectorized and normalized by dividing each of them by the quantity $\left(\frac{1}{5899}\sum_{i=1}^{5899}\|X_i\|^2\right)^{\frac{1}{2}}$.

As initial choices in Algorithm \ref{alg-f} we took $\tau_0=0.99\frac{2\gamma}{\|K\|}, \lambda=\|K\|+1$ and $\sigma_{i,0}=\sqrt{1+\tau_0(2\gamma-\|K\|\tau_0)/\lambda}/(n\tau_0\|K\|^2)$, $i=1,...,n$, and tested different combinations of the kernel parameter $\sigma$ over a fixed number of $1500$ iterations. In Table 2 we present the misclassification rate in percentage for the training and for the test data (the error for the training data is less than the one for the test data). One can notice that for certain choices of $\sigma$ the misclassification rate outperforms the one reported in the literature dealing with numerical methods for support vector classification. Let us mention that the numerical results are given for the case $C=1$. We tested also other choices for $C$, however we did not observe great impact on the results.

\begin{table}\centering
\begin{tabular}{rrrrrr}
\toprule
$\sigma$ &  $0.15$ & $0.175$ & $0.2$ & $0.25$ & $0.5$\\
\midrule
training error & $0$ & $0$ & $0.14$ & $4.09$ & $49.55$\\
test error     & $1.36$ & $2.12$ & $3.33$ & $5.60$ & $49.12$ \\
\bottomrule
\end{tabular}
\caption{Misclassification rate in percentage for different choices of the kernel parameter $\sigma$ and for both the training and the test data set.}
\end{table}

\end{document}